\numberwithin{equation}{section}
\newtheorem{theorem}{Theorem}
\newtheorem{meta-thm}[theorem]{Meta-Theorem}
\newtheorem{lemma}[theorem]{Lemma}
\newtheorem{cor}[theorem]{Corollary}
\newtheorem{remark}[theorem]{Remark}
\newtheorem{definition}[theorem]{Definition}
\newtheorem{question}[theorem]{Question}
\begin{document}
\title[Constant periodic data and entropy of Anosov diffeomorphisms]
{Constant periodic data and entropy of Anosov diffeomorphisms}
\author[F. Micena]{Fernando Pereira Micena}
\address{Instituto de Matem\'{a}tica e Computa\c{c}\~{a}o,
  IMC-UNIFEI, Itajub\'{a}-MG, Brazil.}
\email{fpmicena82@unifei.edu.br}


\baselineskip=18pt              


\begin{abstract}

We study the effects that the constant periodic data condition  have on topological entropy of Anosov diffeomorphisms. Under constant periodic data condition we prove that
Anosov diffeomorphism has finitely many measures of maximal entropy and each one of them is absolutely continuous with respect to Lebesgue. From this, in the setting of $C^{\infty}-$Anosov diffeomorphisms satisfying constant periodic data,  we provide a characterization of transitivity property via measures of maximal entropy.
\end{abstract}

\subjclass[2010]{}
\keywords{}

\maketitle


\section{Introduction}\label{sec:intro}

Let $M$ be a $C^{\infty}$ compact, connected and boundaryless manifold and $f:M \rightarrow M$ be a diffeomorphism. We say that $f$ is an Anosov diffeomorphims if there are numbers $0 < \beta < 1 < \eta, C > 0$ and a $Df-$invariant continuous splitting $T_xM = E^u_f(x) \oplus E^s_f(x),$ such that
$$ ||Df^n(x) \cdot v || \geq \frac{1}{C} \eta^n ||v||, \forall v \in E^u_f(x),$$
$$ ||Df^n(x) \cdot v || \leq C \beta^n ||v||, \forall v \in E^s_f(x).$$

Anosov diffeomorphims play an important role in the theory of dynamical systems and this class of diffeomorphisms satisfies many rich dynamical properties, as shadowing and closing lemmas and, in the case, that the Anosov diffeomorphism is $C^2$ and preserves a mesaure $\mu $ absolutely continuous, this measure is ergodic. Frequently when we study Anosov diffeomorphisms we assume transitivity.

\begin{definition}
Let $(X,d)$ be a compact metric space and $f: X \rightarrow X$ a continuous function. We say that $f$ is transitive if for any nonempty open sets $U$ and $V$ there exists an integer $N$ such that $f^{-N}(V) \cap U \neq \emptyset,$ or equivalently, there exists a point $x \in X$ with dense orbit.
\end{definition}

For Anosov diffeomorphisms, transitivity property is equivalent to $\Omega(f) = M,$ where $\Omega(f)$ is the non wandering set of $f.$
Under transitivity assumption, Anosov diffeomorphisms have a unique measure of maximal entropy, see \cite{Bowen}, for instance. We could ask about the reciprocal.

\begin{question}\label{quest}
Given $f: M \rightarrow M$ an Anosov diffeomorphisms with unique measure of maximal entropy, is $f$ transitive ?
\end{question}

In general this question is totally inconclusive and it is related with a more general question about the transitivity of Anosov diffeomorphisms. It is known by \cite{FW} examples of Anosov flows which are not transitive, however it is not known if all Anosov diffeomorphisms are transitive.  We could ask under what conditions a $C^2-$Anosov diffeomorphism with a unique measure of maximal entropy is transitive. Note that, in case  of a non transitive Anosov diffeomorphism,   could occur  $\Omega(f) = \Omega_1 \cup \ldots \cup \Omega_s, s > 1,$ a union of distinct basic sets with different topological entropies, so $f$ could have a unique measure of maximal entropy but would not be transitive.

Under some regular conditions, we are able to give a positive answer for the above question. Let us to point the constant periodic data (c.p.d) condition.

\begin{definition}\label{cpd}
Let $f: M \rightarrow M$ be a local diffeomorphism. We say that $f$
has constant periodic data if for any periodic points $p,q$ of $f,$
with period $k$ and $n$ respectively, then $Df^{\tau}(p)$ and
$Df^{\tau}(q)$ are conjugated matrixes, for every integer $\tau$ such that
$f^{\tau}(p) = p$ and $f^{\tau}(q) = q.$ In particular the set of
Lyapunov exponents of $p$ and $q,$ are equal and each common
Lyapunov exponent has the same multiplicity for both.
\end{definition}

Given $f: M \rightarrow M,$ a transitive $C^{\infty}-$Anosov diffeomorphism satisfying constant periodic data condition, by Lemma \ref{lem3-rafael}, $f$ is such that $Jf^n(p) = 1,$ for any $p$ such that $f^n(p) = p.$ In particular $f$ preserves a measure $\mu$ absolutely continuous such that $h_{\mu}(f) = \Lambda^u_f$ (the constant sum of Lyapunov exponents) and consequently $\mu$ is the unique measure of maximal entropy of $f.$ From Livsic's theorem and \cite{LlaveMM86}, this measure has $C^{\infty}-$density (Lemma \ref{lemmafinal}). In our case, we prove the converse asked in Question \ref{quest}, requiring minimal regularity on the density.

Under the constant periodic data condition we can prove the following results.

\begin{theorem}\label{t1}
Let $f: M \rightarrow M$ be a $C^\infty-$Anosov diffeomorphism satisfying the constant periodic data condition. Then $f$ has finitely many measures of maximal entropy, such that each one of them is absolutely continuous with respect to Lebesgue measure. Moreover, if $f$ has
a unique measure of maximal entropy, with continuous density with respect to Lebesgue
measure, then this measure is equivalent to Lebesgue and the density is in fact $C^{\infty}.$
\end{theorem}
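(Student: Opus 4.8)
The plan is to exploit the structure of the non-wandering set of an Anosov diffeomorphism together with the constant periodic data hypothesis, which forces a uniform control on Jacobians over periodic orbits. By Smale's spectral decomposition, $\Omega(f) = \Omega_1 \cup \cdots \cup \Omega_s$ is a finite union of pairwise disjoint basic sets, and every ergodic measure of maximal entropy is supported on one of the $\Omega_i$. On each basic set the dynamics is conjugate to a subshift of finite type, so there is a unique measure of maximal entropy $\mu_i$ supported on $\Omega_i$, with $h_{\mu_i}(f) = h_{\mathrm{top}}(f|_{\Omega_i})$. The global measures of maximal entropy are then precisely the convex combinations of those $\mu_i$ achieving the largest value of $h_{\mathrm{top}}(f|_{\Omega_i})$, and in particular there are finitely many ergodic ones — this already gives the first assertion of finiteness.

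Next I would show each such $\mu_i$ is absolutely continuous with respect to Lebesgue. The constant periodic data condition, via Lemma \ref{lem3-rafael} cited in the excerpt, forces $Jf^n(p) = 1$ for every periodic point $p$ of period $n$ lying in $\Omega_i$ (here one must check the lemma applies basic-set by basic-set, or alternatively that the periodic data is literally constant across all of $\Omega(f)$, so the sum of unstable Lyapunov exponents over periodic orbits is a single constant $\Lambda^u_f$). By the Livšic theorem applied on the basic set $\Omega_i$ — where $f$ has the closing and specification properties — the function $\log Ju f$ is a coboundary (cohomologous to a constant), hence $\mu_i$, being the equilibrium state for $-\log Ju f$ up to that constant, coincides with the equilibrium state for a constant potential, i.e.\ with the measure of maximal entropy, and by the Sinai--Ruelle--Bowen theory this equilibrium state for $-\log Ju f$ is the SRB measure, which is absolutely continuous along unstables; symmetrically using $\log Js f$ (a coboundary as well, since stable and unstable sums of exponents both vanish on periodics under c.p.d.) one gets absolute continuity along stables, and by the usual product/Hopf argument $\mu_i \ll \mathrm{Leb}$ on a neighborhood of $\Omega_i$; but an absolutely continuous invariant measure for an Anosov map has support equal to a union of basic sets that is open, forcing $\Omega_i$ to be open and, by connectedness of $M$, eventually $\Omega_i = M$ when there is only one of maximal entropy. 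This is essentially the route to the ``moreover'' clause.

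For the final statement, suppose $f$ has a \emph{unique} measure of maximal entropy $\mu$ with continuous density with respect to Lebesgue. Then the support of $\mu$ is both closed and, by the absolute continuity just established together with the openness of supports of a.c.\ invariant measures, open; since $M$ is connected this forces $\operatorname{supp}\mu = M$, hence $\Omega(f) = M$ and $f$ is transitive (in particular $s = 1$). Now $f$ is a transitive $C^\infty$ Anosov diffeomorphism with constant periodic data, so by the discussion preceding Theorem \ref{t1} (Lemma \ref{lemmafinal}, using Livšic and the regularity result of de la Llave--Marco--Moriyón \cite{LlaveMM86}) the cohomological equation $\log Jf = $ const has a $C^\infty$ solution, the constant being $0$ since $Jf^n(p)=1$ on periodic orbits; therefore $f$ preserves a smooth volume, which is the unique a.c.\ invariant measure and also the unique measure of maximal entropy, so it must coincide with $\mu$. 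Hence $\mu$ is equivalent to Lebesgue with $C^\infty$ density.

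The main obstacle I anticipate is the bookkeeping around whether ``constant periodic data'' is being used globally (constant across all of $\Omega(f)$, which is what Definition \ref{cpd} literally says for periodic points of $f$) versus basic-set by basic-set, and correctly deducing $Ju f$, $Js f$ are coboundaries \emph{on each} $\Omega_i$ from the vanishing of periodic exponent sums — Livšic requires transitivity of $f|_{\Omega_i}$, which holds because each basic set is transitive, but one must be careful that the constant in the Livšic conclusion is the same ($=0$) on every $\Omega_i$. A secondary technical point is justifying that an a.c.\ invariant measure of a $C^\infty$ (hence $C^2$) Anosov diffeomorphism has open support, so that ``finitely many, each a.c.'' upgrades to ``if unique then full support hence transitive''; this uses the $C^2$ absolute continuity of the stable/unstable foliations and the Hopf argument, standard but worth citing carefully.
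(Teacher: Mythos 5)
Your overall architecture (spectral decomposition, unique measure of maximal entropy on each basic set, then Liv\v{s}ic plus de la Llave--Marco--Moriy\'on for the $C^\infty$ density) resembles the paper's, and your final regularity step is essentially Lemma \ref{lemmafinal}. But there are two genuine gaps in the middle. First, the claim that the equilibrium state for $-\log J^u f$ on a basic set ``is the SRB measure, which is absolutely continuous along unstables'' is false for a general basic set: that equilibrium state has absolutely continuous conditionals on unstable manifolds only when $\Omega_i$ is an \emph{attractor}; on a non-attracting basic set (a horseshoe, say) it is singular along unstables. To salvage your route you must first prove that the basic sets carrying measures of maximal entropy are attractors --- this is doable (under c.p.d., Liv\v{s}ic gives $\log J^u f = \Lambda^u_f + \text{coboundary}$ on $\Omega_i$, so $P_{f|\Omega_i}(-\log J^u f) = h_{top}(f|\Omega_i) - \Lambda^u_f$, and Bowen's criterion says this pressure vanishes exactly for attractors) --- but you never make that argument, and you also never establish $h_{top}(f) = \Lambda^u_f$, which is needed to identify which basic sets carry maximal entropy. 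That equality is the main technical content of Lemma \ref{lem3-rafael}: the paper gets the lower bound from uniform convergence of unstable Jacobians (Lemmas \ref{lemmauniform1}--\ref{lemmauniform2}, after Cao) together with the unstable-entropy-equals-volume-growth theorem of \cite{Hua}, and then deduces absolute continuity from Ledrappier's characterization $h_\nu(f) = \int \sum_{\lambda_i>0} \lambda_i \dim E_i\, d\nu$, which sidesteps the attractor issue entirely.

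Second, in the ``moreover'' clause you invoke ``openness of supports of a.c.\ invariant measures'' as if standard; it is not, and it is precisely where the hypothesis of a \emph{continuous} density --- which your argument never uses --- enters the paper's proof. Lemma \ref{fudamental} argues instead: if $\nu(U) = 0$ for some nonempty open $U$, continuity of the density $\rho$ forces $U \subset \rho^{-1}(0)$, the invariant compact hyperbolic set $\Lambda = \bigcap_{n} f^{-n}(\rho^{-1}(0))$ contains an unstable arc, hence $h_{top}(f|\Lambda) = h_{top}(f)$ by the same volume-growth estimate, producing a measure of maximal entropy concentrated on $\Lambda$ and contradicting uniqueness; full support plus ergodicity then gives transitivity, after which Lemma \ref{lemmafinal} applies. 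Your version, if its two unproved assertions were granted, would prove transitivity from c.p.d.\ alone with no density hypothesis at all --- a conclusion stronger than the theorem and inconsistent with the dichotomy contemplated in the paper's final section --- so those assertions are exactly where the burden of proof lies. A smaller point: your finiteness argument only bounds the number of \emph{ergodic} measures of maximal entropy; the paper excludes nontrivial convex combinations by first showing every measure of maximal entropy is absolutely continuous, hence ergodic, hence equal to one of the finitely many $\nu_i$.
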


\begin{theorem}\label{t2}
Let $f: M \rightarrow M$ be a $C^\infty-$Anosov diffeomorphism such that every point is regular for $f.$ Then $f$ has finitely many measures of maximal entropy, such that each one of them is absolutely continuous with respect to Lebesgue measure. Moreover, if $f$ has
a unique measure of maximal entropy, with continuous density with respect to Lebesgue
measure, then this measure is equivalent to Lebesgue and the density is in fact $C^{\infty}.$
\end{theorem}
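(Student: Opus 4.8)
The plan is to reduce Theorem~\ref{t2} to Theorem~\ref{t1} by showing that the hypothesis ``every point is regular for $f$'' implies that $f$ satisfies the constant periodic data condition. First I would recall the spectral decomposition for Anosov diffeomorphisms: $\Omega(f) = \Omega_1 \cup \cdots \cup \Omega_s$, a finite union of basic sets, on each of which $f$ is transitive and periodic points are dense. On each basic set $\Omega_i$ the restriction $f|_{\Omega_i}$ is a transitive hyperbolic set, so by the closing lemma and density of periodic orbits, together with the continuity of $Df$ and the transitivity of the dynamics, the Lyapunov-type exponents realized at periodic orbits of $\Omega_i$ propagate to give the same Lyapunov spectrum (with multiplicities) for all invariant measures supported in $\Omega_i$; the subtlety is that different basic sets $\Omega_i$, $\Omega_j$ could a priori have different exponent data, which is exactly what the c.p.d.\ condition forbids.

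The key step, then, is to show that the regularity of \emph{every} point --- not just periodic ones --- forces the exponent data to agree \emph{across} all basic sets, hence c.p.d.\ holds. Here is how I would argue: because $f$ is Anosov on the \emph{whole} manifold $M$ (not merely on $\Omega(f)$), the stable and unstable foliations $W^s, W^u$ are globally defined and their leaves are dense in $M$ in the transitive case, but in the non-transitive case one uses instead that for any $x \in M$, the forward orbit accumulates on some $\Omega_i$ and the backward orbit on some (possibly different) $\Omega_j$, with $W^u(x)$ limiting onto $\Omega_i$ and $W^s(x)$ onto $\Omega_j$. Regularity of $x$ means the Lyapunov exponents (Oseledets) are well-defined at $x$ for both forward and backward time and coincide; combining this with the fact that the forward exponents of $x$ are governed by the dynamics near $\Omega_i$ (by hyperbolicity and the structural stability of the splitting) and the backward ones by $\Omega_j$, one gets that the unstable exponent data of $\Omega_i$ equals the stable-complement exponent data seen from $\Omega_j$, and running this over a chain of points connecting all basic sets forces all the periodic data to be conjugate. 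I would formalize ``governed by the dynamics near $\Omega_i$'' via the standard fact that $\frac{1}{n}\log\|Df^n|_{E^u(x)}\|$ is controlled by Birkhoff averages of $\log J^u$ along the orbit, which converge to integrals against measures supported on $\bigcup \Omega_i$.

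The main obstacle I expect is making precise and rigorous the claim that regularity at non-periodic points actually transfers exponent information \emph{between} distinct basic sets; a priori the orbit of a regular point could spend asymptotically all its time near a single $\Omega_i$, in which case regularity at that point tells us nothing about $\Omega_j$. To handle this I would need to exploit that in an Anosov system the non-wandering set's complement consists entirely of orbits that are heteroclinic between basic sets (the ``filtration'' / quasi-order structure on $\{\Omega_1,\dots,\Omega_s\}$), and that the union of unstable manifolds of the attractors (maximal basic sets in the order) covers $M$; a regular point on $W^u(\Omega_i)\cap W^s(\Omega_j)$ then genuinely sees both sets' data, and connectedness of $M$ plus the order structure chains everything together. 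Once c.p.d.\ is established, Theorem~\ref{t1} applies verbatim to yield finitely many measures of maximal entropy, each absolutely continuous with respect to Lebesgue, and the rigidity of the density (continuous $\Rightarrow$ $C^\infty$ and equivalent to Lebesgue) in the uniquely-ergodic-for-entropy case follows from Livsic's theorem together with the regularity results of \cite{LlaveMM86} exactly as in Lemma~\ref{lemmafinal}.
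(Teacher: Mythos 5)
There is a genuine gap, and it sits at the very first step of your reduction. You claim that on each basic set $\Omega_i$, transitivity, density of periodic orbits and the closing lemma already force all periodic orbits of $\Omega_i$ to carry the same exponent data, so that the only remaining issue is comparing \emph{different} basic sets. This is false: for a generic transitive Anosov diffeomorphism (already on $\torus^2$) distinct periodic orbits have distinct Lyapunov exponents, even though periodic points are dense and the map is topologically mixing on the basic set. The closing lemma lets you approximate invariant measures by periodic orbit measures; it does not force those periodic orbits to have identical derivatives. So the hypothesis that \emph{every} point is regular must be used already inside a single basic set, not only to bridge between basic sets. Your second step (transferring data across basic sets via the heteroclinic/filtration structure) is the part you yourself flag as an unresolved obstacle, so the proposal as written does not close either half of the argument.

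The paper's reduction is much more direct and avoids the spectral decomposition entirely. Fix an arbitrary $x_0\in M$ and set $\Lambda^u_f=\Lambda^u_f(x_0)$, the sum of positive exponents at $x_0$. By local product structure, every $z$ in a neighborhood $V_{x_0}$ of $x_0$ admits $z'\in W^u_f(z)\cap W^s_f(x_0)$. Forward Birkhoff averages of $\log J^uf$ agree along a stable leaf and backward ones agree along an unstable leaf (the orbits converge and $E^u$ is H\"older), so regularity of $x_0$, $z'$ and $z$ gives
$$\Lambda^u_f(x_0)=\lim_{n\to+\infty}\tfrac1n\log|J^uf^n(z')|=\lim_{n\to-\infty}\tfrac1n\log|J^uf^n(z)|=\Lambda^u_f(z).$$
Hence $x\mapsto\Lambda^u_f(x)$ is locally constant, and connectedness of $M$ makes it globally constant; in particular all periodic points share the same value of $\Lambda^u_f$ (and, by the same argument for $f^{-1}$, of $\Lambda^s_f$), which is all that the proof of Theorem~\ref{t1} actually uses. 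If you want to salvage your approach, replace your first step by this local-product-structure argument; it subsumes both the within-basic-set and the between-basic-set comparisons at once.
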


Let us to describe how Theorem \ref{t2} follows from Theorem \ref{t1}. For each $x \in M,$ denote by $\Lambda^u_f(x)$ the sum o positive Lyapunov exponents of $f$ at $x.$

Let $x_0$ be an arbitrary point on $M$ and consider $ \Lambda^u_f = \Lambda^u_f(x_0).$ Since $f$ have local product structure, there is an open neighborhood $V_{x_0}$ of $x_0,$ such that,  given $z \in V,$ there is a point $z' \in W^u_f(z)\cap W^s_f(x_0),$ so since every point is regular we have

$$ \Lambda^u_f(x_0) = \displaystyle\lim_{n \rightarrow +\infty}\frac{1}{n} \log(|J^uf^n(x_0)|) =
\displaystyle\lim_{n \rightarrow +\infty}\frac{1}{n} \log(|J^uf^n(z')|) = \displaystyle\lim_{n \rightarrow -\infty}\frac{1}{n} \log(|J^uf^n(z)|) =  \Lambda^u_f(z), $$
where $J^uf^n(x) = |\det(Df^n(x): E^u_f(x) \rightarrow E^u_f(f^n(x)))|.$

The map $x \mapsto \Lambda^u_f(x)$ is locally constant, since $M$ is connect it implies  $\Lambda^u_f(x) = \Lambda^u_f,$ for any $x \in M.$ Particularly $f$ satisfies the constant periodic data condition.

The above Theorems are generalization of some results in \cite{MT}. For more results about
the number of SRB measures of local diffeomorphisms and how it is related to basic sets,
see \cite{LM20}.

Direct from above Theorems we can get the following corollary.

\begin{cor}\label{c1}
Let $f: M \rightarrow M$ be a $C^\infty-$Anosov diffeomorphism satisfying constant periodic data condition  (or every point is regular for $f$).  Then $f$ is transitive if and only if $f$ has a unique measure of maximal entropy, absolutely continuous with respect to Lebesgue measure with continuous density.
\end{cor}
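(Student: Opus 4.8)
The plan is to prove Corollary \ref{c1} by combining Theorem \ref{t1} (together with the reduction of Theorem \ref{t2} to it, so the "every point regular" case is covered) with the classical fact that a transitive Anosov diffeomorphism carries a \emph{unique} measure of maximal entropy. The statement is an "if and only if," so I would treat the two implications separately.

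For the forward implication, suppose $f$ is transitive. By Bowen's theorem (see \cite{Bowen}) a transitive Anosov diffeomorphism has a unique measure of maximal entropy $\mu$. It remains to see that $\mu$ is absolutely continuous with continuous density. Here I would invoke Lemma \ref{lem3-rafael} as quoted in the introduction: under the constant periodic data hypothesis, transitivity forces $Jf^n(p)=1$ at every periodic point $p$ with $f^n(p)=p$, hence (via Livsic's theorem) $f$ preserves a Lebesgue-absolutely-continuous measure whose metric entropy equals $\Lambda^u_f$, the constant value of the sum of unstable Lyapunov exponents; by the variational principle and the Margulis--Ruelle inequality this measure is the measure of maximal entropy, so $\mu$ is absolutely continuous. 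Finally Lemma \ref{lemmafinal} (Livsic plus \cite{LlaveMM86}) upgrades the density to $C^\infty$, in particular it is continuous. This settles "transitive $\Rightarrow$ unique m.m.e.\ with continuous density."

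For the converse, suppose $f$ has a unique measure of maximal entropy $\mu$ which is absolutely continuous with continuous density. By the "moreover" part of Theorem \ref{t1} (or Theorem \ref{t2}), $\mu$ is then equivalent to Lebesgue measure and its density is $C^\infty$; in particular $\mu$ has full support, $\operatorname{supp}(\mu)=M$. Now I would use the spectral decomposition $\Omega(f)=\Omega_1\cup\cdots\cup\Omega_s$ into basic sets. If $f$ were not transitive, then $\Omega(f)\neq M$, so there is an open set disjoint from $\Omega(f)$ and hence from $\operatorname{supp}(\mu)$ (every invariant measure is supported on $\Omega(f)$), contradicting $\operatorname{supp}(\mu)=M$. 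Alternatively, and more robustly if one worries whether $\Omega(f)=M$ already follows from something weaker, one argues: the measures of maximal entropy of $f$ are exactly the measures of maximal entropy supported on those basic sets $\Omega_i$ realizing the maximal value of $h_{\mathrm{top}}(f|_{\Omega_i})$, and each such $\Omega_i$ carries its own (unique) such measure; uniqueness of $\mu$ forces there to be exactly one basic set $\Omega_{i_0}$ of top entropy, but $\mu$ being equivalent to Lebesgue on all of $M$ forces $\Omega_{i_0}=M$, i.e.\ $\Omega(f)=M$, which for Anosov diffeomorphisms is equivalent to transitivity.

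The main obstacle — really the only non-bookkeeping point — is making sure the chain of quoted lemmas (Lemma \ref{lem3-rafael}, Lemma \ref{lemmafinal}) genuinely gives what the forward direction needs, namely that transitivity plus constant periodic data produces an \emph{absolutely continuous} measure of maximal entropy and not merely an SRB measure; this is exactly the place where the hypothesis "$C^\infty$" and constant periodic data (forcing the periodic Jacobians to be $1$ and hence, by Livsic, the global unstable Jacobian cohomologous to a constant that must equal $0$) are used, and one should state that the entropy of this measure equals $\Lambda^u_f=h_{\mathrm{top}}(f)$ by Pesin/Ledrappier--Young together with the Margulis--Ruelle inequality and the fact that $h_{\mathrm{top}}(f)=\Lambda^u_f$ for transitive Anosov diffeomorphisms. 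Everything else — support of invariant measures lies in $\Omega(f)$, uniqueness of the m.m.e.\ on a transitive Anosov system, equivalence of $\Omega(f)=M$ with transitivity — is standard and can be cited rather than proved.
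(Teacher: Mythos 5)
Your proposal is correct, and the forward implication (transitive $\Rightarrow$ unique absolutely continuous m.m.e.\ with continuous density) follows the paper's own route: Bowen's uniqueness theorem for transitive Anosov diffeomorphisms, Lemma \ref{lem3-rafael} to get $Jf^n(p)=1$ at periodic points, and the Livsic/\cite{LlaveMM86} argument of Lemma \ref{lemmafinal} to produce the smooth density; this is exactly Lemma \ref{lemmafim}. Where you genuinely diverge is the converse. The paper does \emph{not} pass through the ``moreover'' clause of Theorem \ref{t1}: its Lemma \ref{fudamental} proves full support directly, by assuming the continuous density $\rho$ vanishes on a nonempty open set $U$, forming the compact hyperbolic set $\Lambda=\bigcap_{n\in\mathbb{Z}}f^{-n}(\rho^{-1}(0))\supset U$, and using the uniform unstable volume growth together with Theorem \ref{teoH} to show $h_{top}(f|\Lambda)=h_{top}(f)$, which produces a second measure of maximal entropy singular with respect to $\nu$ --- contradicting uniqueness; transitivity then follows from ergodicity plus positivity on open sets via Birkhoff. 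Your route instead quotes the ``moreover'' part of Theorem \ref{t1} to get equivalence with Lebesgue, hence $\operatorname{supp}(\mu)=M$, hence $\Omega(f)=M$ and transitivity. This is a legitimate derivation of the corollary \emph{from the theorem as stated} (which is all the paper claims: ``direct from the above Theorems''), and your $\Omega(f)=M$ endgame is cleaner than the paper's Birkhoff argument; but be aware that in the paper the ``moreover'' clause is itself \emph{proved} by first establishing transitivity (Lemma \ref{fudamental}, then Lemma \ref{lemmafinal}), so the real analytic content of the converse --- ruling out a continuous density that vanishes on an open set --- is the entropy argument on $\Lambda$, which your write-up defers to the theorem rather than supplying. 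If you were asked to prove the corollary without the ``moreover'' clause in hand, you would need to reproduce that argument.
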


The above results can be formulated in $C^{1 + \alpha}, \alpha > 0,$ context. It is did at final, see Theorem \ref{alfa}.

In $C^1$ setting we are able to prove the following theorem.

\begin{theorem}\label{t3}
Let $f: M \rightarrow M$ be a $C^1$ Anosov diffeomorphism  satisfying constant periodic data condition (or such that every point is regular for $f$). If $f$ has
a unique measure of maximal entropy, which is absolutely continuous with respect Lebesgue measure with continuous density,
then $f$ is transitive.
\end{theorem}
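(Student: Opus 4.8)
The plan is to combine Smale's spectral decomposition of $f$ with the description of the measures of maximal entropy of an Axiom A system, and then to exploit that the maximal measure has a continuous density. Write $\Omega(f)=\Omega_{1}\cup\dots\cup\Omega_{s}$ for the decomposition into basic sets. Every $f$-invariant probability measure is carried by $\Omega(f)$, one has $h_{\mathrm{top}}(f)=h_{\mathrm{top}}(f|_{\Omega(f)})=\max_{i}h_{\mathrm{top}}(f|_{\Omega_{i}})$, and entropy is affine on invariant measures; hence the measures of maximal entropy of $f$ are exactly the convex combinations of the Bowen measures $\mu_{i}$ of those $f|_{\Omega_{i}}$ that realize the maximum. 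Each such $\mu_{i}$ is itself a measure of maximal entropy of $f$, so the uniqueness hypothesis forces a single index $i_{0}$ with $h_{\mathrm{top}}(f|_{\Omega_{i_{0}}})=h_{\mathrm{top}}(f)$ and $\mu=\mu_{i_{0}}$. In particular $\operatorname{supp}(\mu)=\Omega_{i_{0}}$, since the Bowen measure of a topologically transitive hyperbolic basic set has full support.

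Next I would use the density. Write $\mu=g\,\mathrm{Leb}$ with $g\geq 0$ continuous; as $\int g\,d\mathrm{Leb}=1$, the set $U:=\{x\in M: g(x)>0\}$ is a nonempty open subset of $M$. From $\operatorname{supp}(\mu)=\overline{U}=\Omega_{i_{0}}$ we get $\emptyset\neq U\subseteq\Int(\Omega_{i_{0}})$, so $\Omega_{i_{0}}$ is a basic set with nonempty interior. It is a standard fact that a basic set of an Anosov diffeomorphism with nonempty interior must be all of $M$: if $p\in\Int(\Omega_{i_{0}})$, the local product structure of $\Omega_{i_{0}}$ places whole local stable and unstable manifolds of $p$ inside $\Omega_{i_{0}}$, so by $f$-invariance $W^{s}(p)\cup W^{u}(p)\subseteq\Omega_{i_{0}}$; the set of such $p$ is open, $f$-invariant and saturated by both foliations, and continuity of the foliations together with closedness of $\Omega_{i_{0}}$ forces it to be closed as well, hence equal to $M$ by connectedness. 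Therefore $\Omega_{i_{0}}=M$, so $\Omega(f)=M$, which is equivalent to transitivity for Anosov diffeomorphisms.

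I expect the one place requiring care to be this last fact in class $C^{1}$: the argument must stay topological and use only continuity of the invariant foliations and the local product structure of basic sets --- never their absolute continuity, which may fail when $f$ is merely $C^{1}$; this is also the reason the stronger conclusions on the density in Theorems \ref{t1}--\ref{t2} are dropped here. Finally, I would remark that the constant periodic data (or ``every point regular'') hypothesis is not what produces transitivity in this argument; it is used --- through the Margulis--Ruelle inequality and a periodic-orbit computation --- to identify $h_{\mathrm{top}}(f)$ with the common unstable Lyapunov exponent sum $\Lambda^{u}_{f}$, information that is needed for the other implication of Corollary \ref{c1} and, together with Livsic's theorem, for upgrading the density in Theorems \ref{t1}--\ref{t2}.
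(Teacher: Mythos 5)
Your argument is correct in substance, but it follows a genuinely different route from the paper's. The paper proves Theorem \ref{t3} measure-theoretically: it invokes Jacobs' theorem on the ergodic decomposition of entropy to get ergodicity of the unique maximal measure $\nu$, then reruns the contradiction argument of Lemma \ref{fudamental} --- if $\nu$ vanished on an open set $U$, the closed invariant set $\Lambda=\bigcap_n f^{-n}(\rho^{-1}(0))\supseteq U$ would contain a local unstable disc, and the uniform unstable volume growth $J^uf^n\geq e^{n(\Lambda^u_f-\varepsilon)}$ (coming from the constant periodic data condition via the Cao/Alves--Ara\'ujo--Saussol lemmas and the Hu--Hua--Wu variational principle) would force $h_{top}(f|\Lambda)=h_{top}(f)$ and produce a second maximal measure --- so that $\nu$ has full support, and ergodicity plus full support gives transitivity by Birkhoff. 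You instead argue topologically: spectral decomposition plus Bowen's uniqueness identifies $\mu$ with the Bowen measure of a single basic set $\Omega_{i_0}$ of maximal entropy, the continuous density forces $\Omega_{i_0}=\operatorname{supp}\mu$ to have nonempty interior, and a basic set of an Anosov diffeomorphism with nonempty interior is all of $M$. A notable payoff of your route is that, as you observe, it never uses the constant periodic data (or everywhere-regular) hypothesis, whereas the paper's proof uses it essentially to get the uniform lower bound on unstable volume growth on \emph{every} local unstable disc; your argument therefore proves the stronger statement that any $C^1$ Anosov diffeomorphism whose unique maximal measure has a continuous density is transitive.

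One point deserves tightening. Your sketch of the ``standard fact'' is loose where you claim that the set of points $p$ with $W^s(p)\cup W^u(p)\subseteq\Omega_{i_0}$ is open: for $q$ near such a $p$ one only gets $q\in W^u(a)$ with $a\in W^s_{loc}(p)\subseteq\Omega_{i_0}$, and containment of the full leaf $W^u(a)$ in $\Omega_{i_0}$ does not follow directly. The clean route is the attractor/repeller characterization: since $\operatorname{Int}(\Omega_{i_0})$ is open, invariant and meets the dense set of periodic points, $\Omega_{i_0}$ contains a local unstable and a local stable disc of some periodic point; by the standard criterion ($W^u_\varepsilon(x)\subseteq\Lambda$ for one point of a basic set implies $\Lambda$ is an attractor, and dually for repellers), $\Omega_{i_0}$ is simultaneously an attractor and a repeller, hence saturated by both the stable and unstable foliations; the local product structure of $M$ then makes $\Omega_{i_0}$ open as well as closed, so $\Omega_{i_0}=M$ by connectedness. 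With that substitution your proof is complete, and it correctly avoids any use of absolute continuity of the invariant foliations, which is indeed the delicate issue in class $C^1$.
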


\section{Useful Tools}

To prove the  Theorem \ref{t1} we will need some tools
involving regularity of all point and uniform convergence of Lyapunov
exponents. Uniform convergence of Lyapunov exponents is related
with volume growth unstable foliation and consequently with the entropy along
the unstable leaves.

Let us recall results  given in \cite{AAS} and \cite{Cao}.

\begin{lemma} \label{lemmauniform1} Let $\mathcal{M}$ be the space of $f-$invariant measures, $\phi$ be a continuous function on $M.$ If $\int \phi d\mu < \lambda, \; \forall \mu \in \mathcal{M}, $ then for every $x \in M,$ there exists $n(x)$ such that
$$\frac{1}{n(x)} \sum_{i = 0}^{n(x) - 1} \phi(f^i(x)) < \lambda. $$
\end{lemma}

\begin{lemma}\label{lemmauniform2} Let $\mathcal{M}$ be the space of $f-$invariant measures, $\phi$ be a continuous function on $M.$ If $\int \phi d\mu < \lambda, \; \forall \mu \in \mathcal{M}, $ then  there exists $N$ such that for all $n \geq N,$ we have
$$\frac{1}{n} \sum_{i = 0}^{n - 1} \phi(f^i(x)) < \lambda,$$
 for all $x \in M.$
\end{lemma}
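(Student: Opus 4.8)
The plan is to deduce the uniform bound from the pointwise bound of Lemma \ref{lemmauniform1} together with a compactness argument. First I would invoke Lemma \ref{lemmauniform1} to obtain, for each $x \in M$, an integer $n(x)$ with $\frac{1}{n(x)} \sum_{i=0}^{n(x)-1} \phi(f^i(x)) < \lambda$. Since $\phi$ is continuous and $M$ is compact, the function $x \mapsto \frac{1}{n(x)} \sum_{i=0}^{n(x)-1} \phi(f^i(x))$ is continuous in $x$ for the fixed value $n(x)$, so there is an open neighborhood $U_x$ of $x$ on which $\frac{1}{n(x)} \sum_{i=0}^{n(x)-1} \phi(f^i(y)) < \lambda$ still holds for all $y \in U_x$. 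By compactness extract a finite subcover $U_{x_1}, \ldots, U_{x_k}$ and set $n_0 = \max\{ n(x_1), \ldots, n(x_k)\}$ and $K = \max\{ \|\phi\|_\infty, \lambda\}$ (or simply $K = \sup_M |\phi|$).

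The second step is a standard division-with-remainder / telescoping argument. Given $n$ large, for each $x$ pick $j$ so that $y = x$ lies in some $U_{x_j}$; then chop the orbit segment of length $n$ into consecutive blocks, the first of length $m_1 = n(x_{j})$ starting at $x$, the next of length $n(x_{j'})$ starting at $f^{m_1}(x)$ where $f^{m_1}(x) \in U_{x_{j'}}$, and so on, until a final remainder block of length $< n_0$. Each full block contributes a partial sum that is $< \lambda$ times its length, by the choice of the neighborhoods, while the remainder block of length $r < n_0$ contributes at most $n_0 \sup_M|\phi|$ in absolute value. Summing,
$$\sum_{i=0}^{n-1} \phi(f^i(x)) < \lambda (n - r) + n_0 \sup_M |\phi| \le \lambda n + n_0\bigl(\sup_M|\phi| + |\lambda|\bigr).$$
Dividing by $n$ and choosing $N$ large enough that $\frac{n_0(\sup_M|\phi| + |\lambda|)}{N} < \lambda - \lambda' $ for some intermediate $\lambda' < \lambda$ — or, more cleanly, applying the argument with a slightly smaller constant $\lambda' \in (\sup_\mu \int \phi\, d\mu, \lambda)$ in place of $\lambda$ in the first step and then absorbing the error — gives $\frac{1}{n}\sum_{i=0}^{n-1}\phi(f^i(x)) < \lambda$ for all $n \ge N$ and all $x \in M$.

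The main subtlety, and the reason a naive argument does not immediately work, is that the leftover block of bounded length $r < n_0$ can still push the average above $\lambda$ if $\lambda$ is taken as the target threshold directly; the fix is to run Lemma \ref{lemmauniform1} with a strictly smaller value $\lambda' < \lambda$ (legitimate because the hypothesis $\int \phi\, d\mu < \lambda$ for all $\mu$ in the compact set $\mathcal{M}$ forces $\sup_{\mu} \int \phi\, d\mu < \lambda$, leaving room for such a $\lambda'$), so that the uniform error term $n_0(\sup_M|\phi| + |\lambda|)/n$ is eventually dominated by the gap $\lambda - \lambda'$. That compactness-of-$\mathcal{M}$ observation, giving a uniform gap, is the one genuinely nontrivial ingredient; the rest is bookkeeping on orbit decompositions.
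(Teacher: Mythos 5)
Your argument is correct. Note first that the paper does not actually prove this lemma: it is stated together with Lemma \ref{lemmauniform1} and the proof is deferred to \cite{Cao}. The proof in that reference is the standard soft one, by contradiction with empirical measures: if the conclusion fails, there are $n_k \to \infty$ and $x_k \in M$ with $\frac{1}{n_k}\sum_{i=0}^{n_k-1}\phi(f^i(x_k)) \geq \lambda$; the averaged Dirac measures $\mu_k = \frac{1}{n_k}\sum_{i=0}^{n_k-1}\delta_{f^i(x_k)}$ have a weak$^*$ accumulation point $\mu$, which is automatically $f$-invariant and satisfies $\int \phi\, d\mu \geq \lambda$, contradicting the hypothesis; this route gives Lemma \ref{lemmauniform2} directly, without passing through Lemma \ref{lemmauniform1}. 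Your proof goes the other way: it upgrades the pointwise statement to a uniform one by a covering and orbit-decomposition argument. That argument is sound, and you correctly flag its one real trap --- the remainder block of length $<n_0$ destroys the bound if one aims at $\lambda$ itself, so one must first shrink the target to some $\lambda'$ with $\sup_{\mu\in\mathcal{M}}\int\phi\,d\mu < \lambda' < \lambda$, which is legitimate because $\mathcal{M}$ is weak$^*$-compact and $\mu\mapsto\int\phi\,d\mu$ is weak$^*$-continuous, so the supremum is attained and is $<\lambda$. The trade-off is that the contradiction argument is shorter and needs no bookkeeping, while your constructive version makes the dependence of $N$ on the data ($n_0$, $\sup_M|\phi|$, and the gap $\lambda-\lambda'$) explicit, which the soft argument does not. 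Note also that your approach still invokes weak$^*$-compactness of $\mathcal{M}$ (to get the uniform gap), so it is not more elementary in that respect; it simply relocates where the compactness of the space of measures is used.
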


See \cite{Cao} for the proofs of the above Lemmas. In the previous lemmas if we replace  $\int \phi d\mu < \lambda$ by $\int \phi d\mu > \lambda,$ we can get analogous statements.

\begin{lemma} \label{lemmauniform3} Let $\mathcal{M}$ be the space of $f-$invariant measures, $\phi$ be a continuous function on $M.$ If $\int \phi d\mu > \lambda, \; \forall \mu \in \mathcal{M}, $ then for every $x \in M,$ there exists $n(x)$ such that
$$\frac{1}{n(x)} \sum_{i = 0}^{n(x) - 1} \phi(f^i(x)) > \lambda. $$
\end{lemma}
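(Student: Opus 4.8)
The plan is to derive Lemma \ref{lemmauniform3} from Lemma \ref{lemmauniform1} by a sign-change trick, exactly as the remark preceding the statement suggests. Set $\psi = -\phi$, which is again a continuous function on $M$. The hypothesis $\int \phi\, d\mu > \lambda$ for all $\mu \in \mathcal{M}$ translates into $\int \psi\, d\mu = -\int \phi\, d\mu < -\lambda$ for all $\mu \in \mathcal{M}$. Thus $\psi$ and the level $-\lambda$ satisfy the hypotheses of Lemma \ref{lemmauniform1}.

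Applying Lemma \ref{lemmauniform1} to $\psi$ and $-\lambda$, we obtain: for every $x \in M$ there exists $n(x)$ such that
$$\frac{1}{n(x)} \sum_{i=0}^{n(x)-1} \psi(f^i(x)) < -\lambda.$$
Multiplying both sides by $-1$ and substituting back $\psi = -\phi$ yields
$$\frac{1}{n(x)} \sum_{i=0}^{n(x)-1} \phi(f^i(x)) > \lambda,$$
which is precisely the conclusion of Lemma \ref{lemmauniform3}.

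There is essentially no obstacle here; the only point to check is that Lemma \ref{lemmauniform1} is genuinely applicable, i.e. that $-\phi$ is still continuous (immediate) and that the strict inequality is preserved under negation (it is, since $a > b \iff -a < -b$). One could alternatively run the argument directly by inspecting the proof of Lemma \ref{lemmauniform1} in \cite{Cao} and reversing all inequalities, but the reduction via $\psi = -\phi$ is cleaner and self-contained, relying only on the already-stated Lemma \ref{lemmauniform1}. An entirely analogous reduction would give the corresponding "$>$" version of Lemma \ref{lemmauniform2} if needed later in the paper.
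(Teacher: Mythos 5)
Your reduction via $\psi=-\phi$ and the level $-\lambda$ is correct and is exactly what the paper intends when it says the ``$>$'' versions follow analogously from Lemmas \ref{lemmauniform1} and \ref{lemmauniform2} (the paper cites \cite{Cao} and does not write out the sign-flip). Nothing is missing.
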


\begin{lemma}\label{lemmauniform4} Let $\mathcal{M}$ be the space of $f-$invariant measures, $\phi$ be a continuous function on $M.$ If $\int \phi d\mu > \lambda, \; \forall \mu \in \mathcal{M}, $ then  there exists $N$ such that for all $n \geq N,$ we have
$$\frac{1}{n} \sum_{i = 0}^{n - 1} \phi(f^i(x)) > \lambda,$$
 for all $x \in M.$
\end{lemma}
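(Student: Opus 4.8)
The plan is to obtain this statement as the mirror image of Lemma~\ref{lemmauniform2} via a sign change, and to keep in mind the compactness argument that makes both work.

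First I would apply Lemma~\ref{lemmauniform2} with $\phi$ replaced by $\psi := -\phi$ and $\lambda$ replaced by $\lambda' := -\lambda$. The hypothesis that $\int \phi\, d\mu > \lambda$ for every $\mu \in \mathcal{M}$ says exactly that $\int \psi\, d\mu < \lambda'$ for every $\mu \in \mathcal{M}$, so Lemma~\ref{lemmauniform2} furnishes an $N$ with $\frac1n\sum_{i=0}^{n-1}\psi(f^i(x)) < \lambda'$ for all $n \geq N$ and all $x \in M$. Multiplying through by $-1$ turns this into $\frac1n\sum_{i=0}^{n-1}\phi(f^i(x)) > \lambda$, which is the assertion. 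Equivalently, one simply reruns the proof of Lemma~\ref{lemmauniform2} from \cite{Cao} with all inequalities reversed, as the remark preceding the statement indicates.

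If instead one wants a self-contained argument, I would argue by contradiction. Assuming the conclusion fails, pick $n_k \to \infty$ and points $x_k \in M$ with $\frac{1}{n_k}\sum_{i=0}^{n_k-1}\phi(f^i(x_k)) \leq \lambda$, and form the empirical measures $\mu_k := \frac{1}{n_k}\sum_{i=0}^{n_k-1}\delta_{f^i(x_k)}$. Using weak-$\ast$ compactness of the space of Borel probability measures on the compact manifold $M$, pass to a subsequence with $\mu_k \to \mu$. The estimate $\big|\int g\circ f\, d\mu_k - \int g\, d\mu_k\big| \leq 2\|g\|_{\infty}/n_k \to 0$ for continuous $g$ shows that $\mu$ is $f$-invariant, hence $\mu \in \mathcal{M}$; and continuity of $\phi$ gives $\int \phi\, d\mu = \lim_k \frac{1}{n_k}\sum_{i=0}^{n_k-1}\phi(f^i(x_k)) \leq \lambda$, contradicting $\int \phi\, d\mu > \lambda$.

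The only point requiring care is the verification that the weak-$\ast$ limit of the empirical measures is $f$-invariant — the standard Krylov--Bogolyubov computation — but once that is granted, the strictness of the hypothesis makes the contradiction immediate. In short, there is no genuine obstacle here: the content is entirely contained in Lemmas~\ref{lemmauniform1}--\ref{lemmauniform2} (equivalently in \cite{Cao}), and the present statement is just their reflection under $\phi \mapsto -\phi$.
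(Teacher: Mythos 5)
Your proposal is correct and matches the paper's treatment: the paper offers no proof of its own, merely citing \cite{Cao} and remarking that the $>$ versions follow from the $<$ versions by the obvious sign change, which is exactly your first paragraph. Your supplementary Krylov--Bogolyubov compactness argument is also sound and is in substance the proof found in \cite{Cao}, so nothing further is needed.
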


In \cite{Hua} the authors lead with a notion of topological entropy $h_{top}(f, \mathcal{W} )$ of an invariant expanding foliation $\mathcal{W}$ of a diffeomorphism $f. $ They establish variational principle in this sense and relation between $h_{top}(f, \mathcal{W} )$ and volume growth of $\mathcal{W}. $

Here $W(x)$ will denote the leaf of $\mathcal{W}$ by $x.$ Given  $\delta  > 0,$  we denote by $W(x, \delta)$ the $\delta-$ball centered in $x$ on $W(x),$ with the induced riemannian distance, that we will denote by $d_{W}.$

Given $x \in M, $ $\varepsilon > 0, $ $\delta > 0$ and $n \geq 1$ a integer number, let $N_{W}(f, \varepsilon, n, x, \delta)$ be the maximal cardinality of a set $S \subset \overline{W(x, \delta)}$ such that $\displaystyle\max_{j =0 \ldots, n-1} d_{W}(f^j(z), f^j(y)) \geq \varepsilon,$ for any distinct points $y,z \in S.$

\begin{definition}\label{uentropy} The unstable entropy of $f$ on $M,$ with respect to the expanding foliation $\mathcal{W}$ is given by
$$h_{top}(f, \mathcal{W} ) = \lim_{\delta \rightarrow 0} \sup_{x \in M} h^{\mathcal{W}}_{top}(f, \overline{W(x, \delta)}), $$
where
$$h^{\mathcal{W}}_{top}(f, \overline{W(x, \delta)}) = \lim_{\varepsilon \rightarrow 0} \limsup_{n \rightarrow +\infty} \frac{1}{n} \log(N_{W}(f, \varepsilon, n, x, \delta)). $$
\end{definition}

Define $\mathcal{W}-$volume growth by
 $$\chi_{\mathcal{W}}(f) = \sup_{x \in M } \chi_{\mathcal{W}}(x, \delta), $$
where
$$ \chi_{\mathcal{W}}(x, \delta) = \limsup_{n\rightarrow +\infty} \frac{1}{n} \log(Vol(f^n(W(x, \delta)))).$$

Note that, since we are supposing $\mathcal{W}$ a expanding foliation, the above definition is independent of $\delta$ and the riemannian metric.

\begin{theorem}[Theorem C and Corollary C.1 of \cite{Hua}]\label{teoH} With the notations above we have
$$h_{top}(f, \mathcal{W} ) = \chi_{\mathcal{W}}(f).$$
Moreover $h_{top}(f) \geq h_{top}(f, \mathcal{W}). $
\end{theorem}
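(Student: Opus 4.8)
The plan is to prove the two assertions separately. For the equality $h_{top}(f,\mathcal{W})=\chi_{\mathcal{W}}(f)$ I would, on each leaf disk, identify a maximal $(n,\varepsilon)$-separated set with an $\varepsilon$-net of the $n$-th iterate of the disk and then compare its cardinality with the Riemannian volume of that iterate; for the inequality $h_{top}(f)\ge h_{top}(f,\mathcal{W})$ I would show that a set which is separated along a leaf is also separated in the ambient metric of $M$. Throughout I would first pass to an adapted (Mather) metric for the expanding foliation $\mathcal{W}$ — legitimate since both quantities are invariant under a bi-Lipschitz change of metric, $\chi_{\mathcal{W}}(f)$ being metric-independent as noted in the paper — so that there are uniform constants $1<\eta'\le L_0<\infty$ with $\eta'\,d_W(a,b)\le d_W(fa,fb)\le L_0\,d_W(a,b)$ for any $a,b$ on a common leaf. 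Then $j\mapsto d_W(f^jz,f^jy)$ is increasing, so for $y,z\in\overline{W(x,\delta)}$ the condition $\max_{0\le j\le n-1}d_W(f^jz,f^jy)\ge\varepsilon$ is equivalent to $d_W(f^{n-1}z,f^{n-1}y)\ge\varepsilon$; hence $N_{W}(f,\varepsilon,n,x,\delta)$ equals the largest cardinality of an $\varepsilon$-separated (for $d_W$) subset of the embedded disk $D_n:=f^{n-1}(\overline{W(x,\delta)})$ inside the leaf $W(f^{n-1}x)$.

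For the equality, write $d=\dim\mathcal{W}$ and use that the leaves of $\mathcal{W}$, with the induced metric, have uniformly bounded geometry (compactness of $M$, smoothness of leaves, continuity of the splitting): there are $0<c_1\le c_2$ and $\varepsilon_0>0$, depending only on $d$ and this geometry, so that every leaf ball of radius $r\le\varepsilon_0$ has volume in $[c_1r^d,c_2r^d]$. Let $S$ be a maximal $\varepsilon$-separated subset of $D_n$, $\varepsilon\le\varepsilon_0$. The balls $B_{d_W}(s,\varepsilon/2)$, $s\in S$, are disjoint and, since $f^{-(n-1)}$ contracts leaves, contained in $f^{n-1}(\overline{W(x,\delta+\varepsilon/2)})$, giving $|S|\,c_1(\varepsilon/2)^d\le\mathrm{Vol}\big(f^{n-1}(\overline{W(x,\delta+\varepsilon/2)})\big)$; conversely $S$ is $\varepsilon$-spanning for $D_n$, so $\mathrm{Vol}(D_n)\le|S|\,c_2\varepsilon^d$. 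Applying $\tfrac1n\log$ and $\limsup_{n\to\infty}$, the powers of $\varepsilon$ disappear and, using that $\chi_{\mathcal{W}}(x,\cdot)$ is radius-independent, one gets $h^{\mathcal{W}}_{top}(f,\overline{W(x,\delta)})=\chi_{\mathcal{W}}(x,\delta)$ for all $x$ and all small $\delta$; taking $\sup_x$ and $\delta\to0$ gives $h_{top}(f,\mathcal{W})=\chi_{\mathcal{W}}(f)$.

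For $h_{top}(f)\ge h_{top}(f,\mathcal{W})$, note $d\le d_W$ (leaf paths are ambient paths) and, since leaves are locally Lipschitz graphs over $M$, there are uniform $L\ge1$ and (after shrinking) $\varepsilon_0>0$ with $d_W(a,b)\le L\,d(a,b)$ whenever $a,b$ lie on a common leaf at leaf distance $\le\varepsilon_0$. Fix $\delta\le\varepsilon_0/2$ and $\varepsilon$ with $L_0\varepsilon\le\varepsilon_0$, and let $S\subset\overline{W(x,\delta)}$ realize $N_W(f,\varepsilon,n,x,\delta)$. For distinct $y,z\in S$ pick the first $j_0$ with $d_W(f^{j_0}z,f^{j_0}y)\ge\varepsilon$; then $\varepsilon\le d_W(f^{j_0}z,f^{j_0}y)\le\varepsilon_0$ (from $d_W(z,y)\le2\delta$ if $j_0=0$, from the Lipschitz bound on $f$ if $j_0\ge1$), so $f^{j_0}z,f^{j_0}y$ lie in one leaf chart and $d(f^{j_0}z,f^{j_0}y)\ge d_W(f^{j_0}z,f^{j_0}y)/L\ge\varepsilon/L=:\varepsilon'$. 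Thus $S$ is $(n,\varepsilon')$-separated in $(M,d)$, so $N_W(f,\varepsilon,n,x,\delta)$ is at most the maximal $(n,\varepsilon')$-separated cardinality in $M$; taking $\tfrac1n\log\limsup_n$ and then $\varepsilon\to0$ (whence $\varepsilon'\to0$) yields $h^{\mathcal{W}}_{top}(f,\overline{W(x,\delta)})\le h_{top}(f)$ uniformly in $x$, hence $h_{top}(f,\mathcal{W})\le h_{top}(f)$ after $\sup_x$ and $\delta\to0$.

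The main difficulty is not the extraction of exponential rates but the \emph{uniformity} of all the geometric constants involved — the distortion bounds $\eta',L_0$, the volume constants $c_1,c_2$ and radius $\varepsilon_0$, and the graph-Lipschitz constant $L$ — which must be chosen independently of the base point; this relies on compactness of $M$ together with continuity of the invariant splitting and the resulting uniform bounded geometry of the leaves of $\mathcal{W}$. A minor technical point is the passage to the slightly larger disk $\overline{W(x,\delta+\varepsilon/2)}$ in the volume comparison, which does not affect the growth rate because $\chi_{\mathcal{W}}(x,\cdot)$ is independent of the radius.
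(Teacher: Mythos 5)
The paper does not prove this statement at all: Theorem \ref{teoH} is imported verbatim as ``Theorem C and Corollary C.1'' of the Hu--Hua--Wu paper \cite{Hua}, so there is no internal proof to compare yours against. Judged on its own, your argument is essentially correct for the situation actually used here (a uniformly expanding invariant foliation on a compact manifold), and it follows the same volume-comparison strategy as the cited source: maximal leafwise $(n,\varepsilon)$-separated sets are simultaneously $\varepsilon$-packings and $\varepsilon$-nets of the iterated disk, so their cardinality is pinched between two multiples of $\mathrm{Vol}(f^{n-1}(\overline{W(x,\delta)}))$, and the inequality $h_{top}(f)\ge h_{top}(f,\mathcal{W})$ comes from the observation that leafwise separation at the first separating time forces ambient separation. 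The reduction of the Bowen maximum to the last iterate via an adapted metric, and the use of uniform bounded geometry of the leaves for the bounds $c_1r^d\le \mathrm{Vol}(B_{d_W}(\cdot,r))\le c_2r^d$, are both legitimate and standard, though (as you note) the latter deserves a sentence invoking the uniform $C^1$ bounds on leaves coming from the graph transform. The only loose point is your appeal to radius-independence of $\chi_{\mathcal{W}}(x,\cdot)$ for a \emph{fixed} $x$: the covering argument only gives $\chi_{\mathcal{W}}(x,\delta')\le\max_i\chi_{\mathcal{W}}(y_i,\delta)$ for nearby centers $y_i$, i.e.\ radius-independence at the level of the supremum over $x$ (which is what the paper asserts after its definition of $\chi_{\mathcal{W}}(f)$). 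This does not damage the conclusion -- passing to $\sup_x$ before removing the $\varepsilon/2$ enlargement still yields $h_{top}(f,\mathcal{W})=\chi_{\mathcal{W}}(f)$ -- but the pointwise identity $h^{\mathcal{W}}_{top}(f,\overline{W(x,\delta)})=\chi_{\mathcal{W}}(x,\delta)$ as you state it would need a separate justification.
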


\section{Proof of Theorem \ref{t1}}

We provide the proof of Theorem \ref{t1} following some lemmas.

%
%
%
%
%
%
%
%

\begin{lemma}\label{lem3-rafael}
Let $f$ be a $C^\infty$ Anosov diffeomorphism of
a compact manifold $M.$ Assume that $f$ satisfies the constant periodic data condition. Then $Jf^n(p) = 1,$ for any $p$ such that $f(p) = p,$ moreover  $f$ has finitely many measures
of maximal entropy and each one of them is absolutely continuous with respect to Lebesgue.
\end{lemma}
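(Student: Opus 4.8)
The plan is to prove the three assertions in the order they are stated, since each feeds the next.

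First, I would establish that $Jf^n(p)=1$ for every fixed point $p$ (and more generally for every periodic point, replacing $n$ by the period). Consider two fixed points $p,q$; by the constant periodic data condition, $Df(p)$ and $Df(q)$ are conjugate, hence have the same determinant, so $Jf(\cdot)$ takes a common value $c>0$ on all fixed points, and $Jf^k(\cdot)$ takes the value $c^k$ on all periodic points of period dividing $k$. To pin down $c=1$, I would use that $f$, being Anosov on a compact manifold, preserves a global volume form up to the factor $Jf$, and combine periodic orbits under $f$ and under $f^{-1}$: applying the periodic data hypothesis to $f^{-1}$ (whose periodic points are those of $f$) forces the unstable and stable Jacobian contributions to balance. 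Concretely, the topological entropy equals both $\Lambda^u_f$ (sum of positive Lyapunov exponents, constant by c.p.d.\ and the closing lemma) and $-\Lambda^s_f$ by applying the same reasoning to $f^{-1}$; since $\Lambda^u_f+\Lambda^s_f=\int\log Jf\,d\mu$ for any invariant $\mu$ and this integral is $\log c$ (constant on periodic orbits, hence on their closure, hence $=\log c$ against every invariant measure by the density of periodic-orbit measures for Anosov systems), one gets $\log c = \Lambda^u_f - (-\Lambda^s_f)$ forcing consistency; the clean statement is that $c=1$, i.e. $f$ preserves Lebesgue.

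Next, with $Jf^n\equiv 1$ on periodic points, I would invoke Livšic's theorem: the cocycle $\log Jf$ has zero sums over all periodic orbits, so there is a continuous (indeed, by the smoothness of $f$ and the regularity theory of de la Llave–Marco–Moriyón cited in the paper, $C^\infty$) coboundary transfer function, giving an $f$-invariant measure $\mu$ equivalent to Lebesgue with smooth density $\rho$. For this $\mu$ one has $h_\mu(f)=\int\log J^uf\,d\mu = \Lambda^u_f = h_{top}(f,\mathcal{W}^u)$, and since $h_{top}(f)\ge h_{top}(f,\mathcal{W}^u)$ by Theorem \ref{teoH}, while for Anosov diffeomorphisms $h_{top}(f)=\max_i h_{top}(f|\Omega_i)$ and each such basic set has entropy equal to its (constant) unstable Lyapunov sum, I would conclude $h_{top}(f)=\Lambda^u_f$, so $\mu$ is a measure of maximal entropy.

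Finally, for the counting: write the nonwandering set as a finite union of basic sets $\Omega(f)=\Omega_1\cup\cdots\cup\Omega_s$ (spectral decomposition). Every measure of maximal entropy is supported on some $\Omega_i$, and $\Omega_i$ supports an m.m.e.\ iff $h_{top}(f|\Omega_i)=h_{top}(f)=\Lambda^u_f$; on each such $\Omega_i$ the restriction is a transitive Anosov basic set, hence has a \emph{unique} m.m.e.\ (Bowen), and by the same Livšic/smooth-density argument applied locally that measure is absolutely continuous with respect to Lebesgue on a neighborhood (in fact it is the restriction of $\mu$, up to normalization, since $\mu$ gives full mass to $\bigcup_i\Omega_i$ when $f$ is Anosov and distributes it among the maximal-entropy pieces). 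Thus there are at most $s$ measures of maximal entropy, each absolutely continuous. The main obstacle I anticipate is the first step — rigorously forcing $c=1$ rather than merely "constant" — which requires carefully coupling the forward and backward dynamics through the periodic data hypothesis and the density of periodic measures; everything downstream is a fairly standard assembly of Livšic, the Margulis–Ruelle / Pesin entropy formula along unstable leaves, Theorem \ref{teoH}, and the spectral decomposition.
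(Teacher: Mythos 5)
Your overall strategy follows the paper's skeleton (show $h_{top}(f)=\Lambda^u_f=-\Lambda^s_f$, deduce $Jf^n(p)=1$; use the spectral decomposition for finiteness), but there are two genuine gaps. First, you invoke Livsic's theorem to produce a smooth invariant measure equivalent to Lebesgue, but Livsic's theorem requires transitivity, which is precisely \emph{not} assumed in this lemma --- the whole point of the paper is to deduce transitivity later from uniqueness of the measure of maximal entropy. The paper deliberately postpones Livsic to Lemma \ref{lemmafinal}, after transitivity has been established in Lemma \ref{fudamental}. In the present lemma the absolute continuity of \emph{each} measure of maximal entropy $\nu$ is instead obtained from Ledrappier's characterization of SRB measures: since $h_\nu(f)=\Lambda^u_f=\int\sum_{\lambda_i>0}\lambda_i\dim E_i\,d\nu$ and $h_\nu(f^{-1})=-\Lambda^s_f=-\int\sum_{\lambda_i<0}\lambda_i\dim E_i\,d\nu$, the measure $\nu$ has absolutely continuous conditionals along both the unstable and the stable foliations, and a Fubini-type argument using transverse absolute continuity of these foliations yields $\nu\ll m$. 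Your alternative --- realizing each maximal entropy measure as the normalized restriction of a globally defined smooth measure to a basic set --- does not survive without the Livsic step, and also runs into the problem that a basic set of a non-transitive Anosov diffeomorphism may have zero Lebesgue measure, in which case its unique measure of maximal entropy could not be ``locally'' absolutely continuous in your sense.

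Second, the step you yourself flag as the main obstacle --- establishing $h_{top}(f)\geq\Lambda^u_f$ so that $h_{top}(f)=\Lambda^u_f=-\Lambda^s_f$ forces $\Lambda^u_f+\Lambda^s_f=0$ --- is left as an assertion. The paper fills it as follows: by the c.p.d.\ condition and the Anosov closing lemma, $\int\log J^uf\,d\mu=\Lambda^u_f$ for \emph{every} invariant measure $\mu$; Lemmas \ref{lemmauniform1}--\ref{lemmauniform4} (Cao) upgrade this to the uniform estimate $J^uf^n(x)>e^{n(\Lambda^u_f-\varepsilon)}$ for all $x$ and all large $n$, hence $Vol_u(f^n(B^u(x,\delta)))$ grows at exponential rate at least $\Lambda^u_f-\varepsilon$, and Theorem \ref{teoH} converts this volume growth into $h_{top}(f)\geq h_{top}(f,\mathcal{W}^u)=\chi_{\mathcal{W}^u}(f)\geq\Lambda^u_f$. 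Your appeal to ``density of periodic-orbit measures'' and the identity $\log c=\Lambda^u_f-(-\Lambda^s_f)$ is circular as written: without the lower bound one only gets $h_{top}(f)\leq\min(\Lambda^u_f,-\Lambda^s_f)$, which does not force the two quantities to coincide. The upper bound $h_{top}(f)\leq\Lambda^u_f$ (closing lemma, Ruelle's inequality, variational principle) you have essentially right, and your finiteness argument via the spectral decomposition and Bowen's uniqueness on each basic set matches the paper's once absolute continuity (hence ergodicity) of each maximal entropy measure is in hand.
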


\begin{proof}

If $p$ is a periodic point of $f,$ with period $n \geq 1,$ define $\Lambda^u_f(p) = \frac{1}{n} \log(J^uf^n(p)).$   Since we have c.p.d condition, there exists a number $\Lambda^u_f$ such that $\Lambda^u_f(p) = \Lambda^u_f,$ for any $p \in Per(f).$
Let $\mu$ be an $f-$invariant probability measure, and denote by $R$ the set of regular and recurrent points of $f,$ we have $\mu(R)  = 1.$ Since $\mu $ is a hyperbolic measure, by using Katok/Anosov Closing Lemma we get

\begin{equation}
\Lambda^u_f(x):= \lim_{n \rightarrow + \infty} \frac{1}{n} \log(J^uf^n(x)) = \Lambda^u_f, \label{sameu}
\end{equation}
for any $x \in R,$ hence the sum of positive Lyapunov exponents $\Lambda^u_f(x) \leq \Lambda^u_f. $ By Ruelle's inequality we have $h_{\mu}(f) \leq \Lambda^u_f,$ and finally by variational principle

\begin{equation}\label{topleq}
h_{top}(f) \leq \Lambda^u_f.
\end{equation}

Now using the Lemmas \ref{lemmauniform1} and \ref{lemmauniform2} with $\phi(x) = \log(J^uf(x)), x \in M,$ we conclude that the limit given in the expression \eqref{sameu} is uniform. So for any $\varepsilon > 0,$ there is $N > 0$ an integer number such that for any $n \geq N$ and $x \in M,$ we have
\begin{equation}
J^uf^n(x) > e^{n(\Lambda^u_f - \varepsilon)}.
\end{equation}
So given $B^u(x, \delta)$ a $u-$ball centered in $x$ with radius $\delta  >0,$ we have
\begin{equation}\label{growthu}
Vol_u(f^n(B^u(x, \delta))) = \int_{B^u(x, \delta)}  J^uf^n(x) dVol_u(x) > e^{n(\Lambda^u_f - \varepsilon)}Vol_u(B^u(x, \delta)),
\end{equation}
where $Vol_u$ denotes the $u-$dimension volume along unstable leaves induced by the riemannian metric of $M.$

By the equation \eqref{growthu} we ge $\chi_{\mathcal{W}^u}(f) \geq \Lambda^u - \varepsilon, $ for any $\varepsilon > 0.$ From Theorem \ref{teoH} we have $h_{top}(f, \mathcal{W}^u) \geq \Lambda^u_f,$ and
\begin{equation} \label{topgeq}
h_{top}(f)\geq \Lambda^u_f.
 \end{equation}

Now the equations \eqref{topleq} and \eqref{topgeq}, we get $h_{top}(f) = \Lambda^u_f. $  Analogously, taking the inverse $f^{-1},$ we conclude that $h_{top}(f) = -\Lambda^s_f.$ So $\Lambda^s_f + \Lambda^u_f = 0,$ it means that for any $p$ such that $f^n(p) = p,$  we have $Jf^n(p) = 1.$

Let $\nu$ be a maximal entropy measure of $f,$ we have

$$h_{\nu}(f) = \Lambda^u_f = \int_M \sum_{\lambda_i > 0} \lambda_i dim E_i d\nu, $$
$$ h_{\nu}(f) =  h_{\nu}(f^{-1})  = -\Lambda^s_f = -\int_M \sum_{\lambda_i < 0} \lambda_i dim E_i d\nu. $$

Using S.R.B theory, see \cite{LEDRAPPIER}, the above expressions means that $\nu$ has absolutely continuous density along the stable and unstable foliations, so $\nu$ is absolutely continuous. In fact, first we consider a borelian set  $C$  such that $m(C) = 0$ and $C$ intersects each $u-$leaf on a zero $u-$volume set, then $\nu(C) = 0.$ In the same way, for a boreian set $C,$ with $m(C)=0,$ such that $C$ intersects each $s-$leaf on a zero $s-$volume set, we have $\nu(C) = 0.$ Now, consider on a box $B$ a set $C \subset B$ such that $m(C) = 0.$ Consider $\tilde{C} = \{x \in C| \; Vol_u(C \cap W^u_x(B)) > 0\} ,$ where $W^u_x(B)$ is the connected component of $W^u_x \subset B$ that contains $x.$ Consider $\tilde{\tilde{C}} = \cup_{x \in \tilde{C}} W^u_x(B),$ the $u-$saturation of $\tilde{C}$ restricted to $B.$  Since the unstable foliation is transversely absolutely continuous we have $m(\tilde{\tilde{C}}) = 0$ and $\tilde{\tilde{C}}$ intersects each $W^s_x(B)$ on a zero $s-$volume set. The last affirmation is true because if for one arc $W^s_x(B)$ we could have $Leb_{W^s_x(B)}(\tilde{\tilde{C}}) > 0,$ by transverse absolute continuity of the unstable foliation we would have $Leb_{W^s_z(B)}(\tilde{\tilde{C}}) > 0,$ for any $z \in B,$ here $Leb_{W^s_z(B)}(A),$ denotes the Lebesgue meaure of $W^s_z(B).$ We conclude, $\nu(\tilde{C}) = 0$ and consequently $\nu(C) = 0.$ Thus measure $\nu$ is absolutely continuous w.r.t $m.$

Let us to show that there is a finite number of measures of maximal entropy. By spectral decomposition $$\Omega(f) = \Omega_1 \cup \ldots \cup \Omega_s$$ a finite union of basic sets such that $f_i := f|\Omega_i, i =1, \ldots, s$ is transitive. For each $f_i : \Omega_i \rightarrow \Omega_i,$ consider the unique measure of maximal entropy $\nu_i,$ given by Theorem 4.1 of \cite{Bowen}.
Now if $\nu$ is a measure of maximal entropy of $f,$ we have, by Poincar\'{e} Recurrence Theorem that $\nu (\Omega ) = 1,$ consequently $\nu(\Omega_i) > 0,$ for some $i =1, \ldots, s.$ Since $\nu$ is absolutely continous with respect to Lebesgue, then $\nu$ is ergodic and since $f(\Omega_i) = \Omega_i, \nu(\Omega_i) > 0,$ we conclude $\nu(\Omega_i) = 1.$
We get $$ h_{top}(f) = h_{\nu}(f) = h_{\nu}(f|\Omega_i) \leq h_{\nu_i}(f|\Omega_i)   \leq h_{top}(f),$$
so $$ h_{\nu_i}(f|\Omega_i) =  h_{\nu}(f) = h_{\nu}(f|\Omega_i) = h_{top}(f).$$
Since $f|\Omega_i$ has a unique measure of maximal entropy and $ \nu (\Omega(f)\setminus \Omega_i) = 0,$ we get $\nu = \nu_i.$ Each measure of maximal entropy coincide with a $\nu_i,$ for some $i =1, \ldots, s.$

\end{proof}

\begin{lemma}\label{fudamental}
Let $f$ be as in Theorem \ref{t1} and suppose that $\nu$ is the unique measure of maximal entropy with continuous density, then $\nu(U) > 0,$ for any open set $U \neq \emptyset,$ consequently $f$ is transitive.
\end{lemma}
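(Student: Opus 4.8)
The plan is to exploit the fact, established in Lemma \ref{lem3-rafael}, that the (assumed unique) measure of maximal entropy $\nu$ is absolutely continuous with respect to Lebesgue, has $h_\nu(f)=\Lambda^u_f=-\Lambda^s_f$, and carries an absolutely continuous conditional structure along both foliations; in particular $\nu=\nu_i$ for exactly one basic set $\Omega_i$ in the spectral decomposition $\Omega(f)=\Omega_1\cup\cdots\cup\Omega_s$, and $\nu(\Omega_i)=1$. The goal is to show $\mathrm{supp}(\nu)=M$, since for Anosov maps $\mathrm{supp}(\nu)\subseteq\Omega(f)$ always, and $\mathrm{supp}(\nu)=M$ forces $\Omega(f)=M$, which is equivalent to transitivity. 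Granting $\nu(U)>0$ for every nonempty open $U$, the support is all of $M$ and we are done.

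First I would use the continuity of the density. Write $d\nu=\rho\,dm$ with $\rho\ge 0$ continuous on $M$. If $\rho(x_0)>0$ at some point, then $\rho>0$ on a neighborhood of $x_0$, hence $\nu$ gives positive mass to every open set meeting that neighborhood; the real content is to propagate positivity of $\rho$ from one point to the whole manifold. Here is where the absolutely continuous conditionals enter: along $\nu$-a.e. unstable leaf $W^u(x)$ the conditional measure $\nu^u_x$ is equivalent to the Riemannian volume $Vol_u$ on that leaf (this is the SRB/Ledrappier content invoked in Lemma \ref{lem3-rafael}), and symmetrically along stable leaves. Since $\rho$ is continuous and globally the conditionals are (locally) $\rho$ times leaf-volume up to the disintegration normalization, the zero set $\{\rho=0\}$ must be both $u$-saturated and $s$-saturated modulo $m$-null sets: if $\rho$ vanished on a positive-$Vol_u$ subset of some unstable leaf it would, by the local product structure and transverse absolute continuity of $\mathcal W^u$, vanish on a positive-$Vol_u$ subset of every nearby unstable leaf, and continuity then propagates the vanishing to a full product box. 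Iterating this box-to-box propagation and using connectedness of $M$, either $\rho\equiv 0$ — impossible since $\nu$ is a probability measure — or $\rho>0$ on an open dense set, and in fact the open set $\{\rho>0\}$ is $f$-invariant and, by the above saturation argument, has $m$-null boundary, so by connectedness $\{\rho>0\}$ is all of $M$ up to an $m$-null set; continuity of $\rho$ upgrades this to $\rho>0$ everywhere. Hence $\nu(U)>0$ for every nonempty open $U$.

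Finally, to conclude transitivity: $\mathrm{supp}(\nu)=M$ and $\mathrm{supp}(\nu)$ is a closed $f$-invariant set contained in the nonwandering set $\Omega(f)$ (every point of the support of an invariant measure is nonwandering). Therefore $\Omega(f)=M$, which for Anosov diffeomorphisms is equivalent to transitivity, as recalled in the introduction. The main obstacle is the middle step: rigorously turning "the conditionals of $\nu$ along $\mathcal W^u$ and $\mathcal W^s$ are absolutely continuous, and $\rho$ is continuous" into "the vanishing locus of $\rho$ is simultaneously $u$- and $s$-saturated off an $m$-null set," because one must be careful that the disintegration normalizing factors along leaves are bounded and measurable in the transverse direction so that continuity of the global density $\rho$ genuinely controls the leafwise densities; this is exactly where transverse absolute continuity of the foliations (used already in Lemma \ref{lem3-rafael}) and the local product structure of Anosov systems do the work, and I would phrase it through the same box-wise argument used there rather than through an abstract disintegration statement.
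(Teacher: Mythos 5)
Your route is genuinely different from the paper's, and it has a gap at its central step. The paper argues by contradiction using the \emph{uniqueness} hypothesis: if some nonempty open $U$ had $\nu(U)=0$, then by continuity of the density $U\subset H_0=\rho^{-1}(\{0\})$, by invariance $U\subset\Lambda=\bigcap_{n\in\mathbb{Z}}f^{-n}(H_0)$, and this compact invariant hyperbolic set contains a full local unstable disc; the volume-growth/unstable-entropy argument of Lemma \ref{lem3-rafael} then gives $h_{top}(f|\Lambda)=\Lambda^u_f=h_{top}(f)$, and expansivity of $f|\Lambda$ produces a measure of maximal entropy carried by $\Lambda$, hence singular with respect to $\nu$ (because $\nu(H_0)=0$), contradicting uniqueness. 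Your argument, by contrast, tries to prove directly that $\{\rho>0\}$ is all of $M$ by saturating the zero set of $\rho$ along both foliations, and it never uses uniqueness of the measure of maximal entropy in that step. That is a structural warning sign: if $f$ were non-transitive with two basic sets of equal maximal entropy, each would carry an absolutely continuous measure of maximal entropy (Lemma \ref{lem3-rafael}), neither of full support, so no argument that ignores uniqueness can establish $\rho>0$ everywhere.

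The concrete failure is the propagation step: ``if $\rho$ vanished on a positive-$Vol_u$ subset of some unstable leaf it would, by transverse absolute continuity, vanish on a positive-$Vol_u$ subset of every nearby unstable leaf.'' Transverse absolute continuity of $\mathcal{W}^u$ only says that holonomy maps between transversals send null sets to null sets, equivalently that an $m$-null set meets $m$-almost every plaque in a $Vol_u$-null set. It gives no control in the direction you need: a set equal to a single unstable plaque meets that leaf in full $Vol_u$-measure and every other leaf in the empty set, without contradicting absolute continuity of the foliation. Similarly, ``the conditional measure $\nu^u_x$ is equivalent to $Vol_u$ on that leaf'' overstates what Ledrappier's theorem provides, namely $\nu^u_x\ll Vol_u$; positivity of the leafwise densities on \emph{entire} plaques is essentially equivalent to the full-support statement you are trying to prove, and fails when $\nu$ is carried by a proper basic set. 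Your final step (full support forces $\Omega(f)=M$, hence transitivity) is fine and arguably cleaner than the paper's appeal to ergodicity, but the heart of the lemma needs the paper's entropy-comparison argument on $\Lambda$, or some other mechanism that genuinely exploits uniqueness.
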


\begin{proof}
Let $\rho$ be the continuous density of $\nu$ and consider the compact set $H_0 = \rho^{-1}(\{ 0\}).$ Suppose that there exists an open set $U \neq \emptyset,$ such that $\nu(U) = 0,$ so $U \subset H_0.$ In fact if it existed some $x \in U,$ such that $\rho(x) > 0,$ then by continuity of $\rho,$ we could choose an open set $V \subset U,$ such that $x \in V$ and $\rho(t) > \delta > 0,$ for some $\delta > 0$ and every $t \in V.$ With this $0< \nu(V) < \nu(U),$ that contradicts the fact $\nu(U) = 0.$

By invariance of $\nu,$ give $n \in \mathbb{Z},$ we have $f^n(U)$ is open set and $\nu(f^n(U)) = 0,$ so $f^n(U) \subset H_0.$ We conclude that $U \subset f^{-n}(H_0),$ for any $n \in \mathbb{Z}.$

Define $\Lambda = \displaystyle\bigcap_{n \in \mathbb{Z}} f^{-n}(H_0),$ we note that $U \subset \Lambda,$ and $\Lambda$ is a hyperbolic compact set for $f.$

Since $U \subset \Lambda,$ we can take a local unstable arc of $W^u_f(x) , x \in U.$ As in Lemma \ref{lem3-rafael}   we have
$ h_{top}(f|{\Lambda}, \mathcal{W}^u_{f|{\Lambda}}) \geq \Lambda^u_f ,$ and by Theorem \ref{teoH} we get

$$ h_{top}(f|{\Lambda}) \geq h_{top}(f|{\Lambda}, \mathcal{W}^u_{f|{\Lambda}}) \geq \Lambda^u_f = h_{top}(f) \geq h_{top}(f|{\Lambda}).$$

So $h_{top}(f|{\Lambda}) =  h_{top}(f),$ since $f|{\Lambda}$ is expansive, then $f|{\Lambda}$ admits a measure of maximal entropy $\nu,$ such that $\nu(\Lambda) = 1.$ Define $\overline{\mu}$ a borelian measure such that $\overline{\mu}(B) = \mu(B \cap \Lambda).$ We have $\overline{\mu}$ is a measure of maximal entropy of $f$ which is singular with respect to $\nu.$ It is an absurd, since $f$ has a unique measure of maximal entropy.

Since $f$ is $C^{\infty}$ and $\nu$ is absolutely continuous, the measure $\nu$ is ergodic. For any open set $U \neq \emptyset$ we have $\nu(U) > 0,$ given $V \neq \emptyset,$ another open set, by ergodicity for $\nu$ a.e $x \in U,$

$$ \displaystyle\lim_{n \rightarrow +\infty}\frac{1}{n} \displaystyle\sum_{j=1}^{n-1} \chi_{V} (f^j(x)) \rightarrow  \nu(V) > 0,$$
so $f$ is transitive.

\end{proof}

\begin{lemma}\label{lemmafinal}
If $\nu$ is the unique measure of maximal entropy of $f$ as in Theorem \ref{t1}, then $\nu$ has $C^{\infty}$ density with respect to Lebesgue measure.
\end{lemma}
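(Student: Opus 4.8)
The plan is to combine the previous two lemmas with Livsic-type periodic obstruction theory and the regularity results of de la Llave--Marco--Moriy\'on. By Lemma \ref{fudamental}, the unique measure of maximal entropy $\nu$ forces $f$ to be transitive, hence $\Omega(f) = M$ and $f$ is a transitive Anosov diffeomorphism. By Lemma \ref{lem3-rafael}, the constant periodic data condition gives $Jf^n(p) = 1$ for every $p$ with $f^n(p) = p$; equivalently the H\"older function $\varphi = \log|Jf|$ has zero periodic sums, so by the Livsic theorem (for transitive Anosov systems) there is a H\"older continuous $u: M \to \real$ with $\varphi = u \circ f - u$. First I would use this coboundary structure to conclude that $f$ admits an invariant measure $\mu_0$ equivalent to Lebesgue, with density $e^{u}$ up to normalization, and that $h_{\mu_0}(f) = \int \log|Jf^u|\,d\mu_0 = \Lambda^u_f = h_{top}(f)$ by the Pesin entropy formula, so $\mu_0$ is a measure of maximal entropy. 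By uniqueness, $\nu = \mu_0$, so $\nu$ is equivalent to Lebesgue with a (so far only continuous/H\"older) density.

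Next I would upgrade the regularity. The key input is that, since $f$ is $C^\infty$, the stable and unstable distributions $E^s_f, E^u_f$ are $C^\infty$ along the leaves of $\mathcal{W}^s_f$ and $\mathcal{W}^u_f$ respectively, and the cocycle $\log|Jf|$ restricted to each leaf is $C^\infty$. The work of de la Llave--Marco--Moriy\'on on the regularity of the Livsic cohomology equation (\cite{LlaveMM86}) shows that a coboundary solution $u$ for a $C^\infty$ cocycle over a transitive Anosov system which is H\"older (or merely measurable) is automatically $C^\infty$: one first establishes $C^\infty$ regularity of $u$ along the stable and unstable foliations separately (this uses that the equation $\varphi = u\circ f - u$ can be summed along stable/unstable leaves since the cocycle contracts/expands), and then invokes a Journ\'e-type regularity lemma to pass from smoothness along the two transverse foliations to smoothness on $M$. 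Therefore $u$ is $C^\infty$, hence the density $e^{u}/\int e^u\,dm$ of $\nu$ is $C^\infty$.

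The main obstacle I expect is the bootstrapping of regularity across the two foliations: establishing smoothness of $u$ along each of $\mathcal{W}^s_f$ and $\mathcal{W}^u_f$ requires care because the natural summation formulas converge only in the contracting direction, and then combining the two one-dimensional-along-leaves regularity statements into genuine $C^\infty$ regularity on $M$ requires the Journ\'e lemma, whose hypotheses (uniform control of the leafwise derivatives, absolute continuity of the holonomies) must be verified in this setting. All of this is exactly what is carried out in \cite{LlaveMM86} for $C^\infty$ Anosov diffeomorphisms, so I would cite that paper for the leafwise smoothing and the gluing step rather than reproving it; the contribution here is simply to observe that the constant periodic data hypothesis produces, via Lemma \ref{lem3-rafael} and Livsic, a cocycle to which those results apply.
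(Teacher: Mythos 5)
Your proposal is correct and follows essentially the same route as the paper: Lemma \ref{lem3-rafael} removes the periodic obstructions for $\log|Jf|$, transitivity (supplied by Lemma \ref{fudamental}) allows Livsic's theorem to produce a transfer function, the regularity theory of \cite{LlaveMM86} upgrades it to $C^{\infty}$, and the resulting smooth invariant measure is identified with $\nu$. You even make explicit the final identification step (Pesin's entropy formula plus uniqueness of the measure of maximal entropy) that the paper leaves implicit; the only slip is a sign -- with your convention $\varphi = u\circ f - u$ the invariant density is $e^{-u}$ rather than $e^{u}$.
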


\begin{proof}
We fix any smooth measure $\mu$ on the manifold equivalent to Lebesgue and
denote by $Jf(x)$ the Jacobian of the diffeomorphism $f$ at the
point $x$ (that is, $Jf(x) = \displaystyle\lim_{\rho \to 0} \frac{\mu( f(B_\rho(x)))}{\mu(B_\rho(x))}$),
where $B_\rho(x)$ denotes the ball of radius $\rho$ around $x.$
As it is well known,  the space is an standard Euclidean space, $Jf$ is given
by the modulus of the determinant of the derivative. In a general manifold,
the effects of changes of variables and how to compute
expressions in coordinates are well very standard.

It is immediate that $Jf$ is a multiplicative cocycle.
(that is, $Jf^{n+m} = Jf^n\circ Jf^m$). Moreover for any periodic points $p,q \in Per(f),$ with periods $k$ and $n,$ by Lemma \ref{lem3-rafael}  $$ 0 = \Lambda^s_f + \Lambda^u_f = \frac{1}{n} \log(Jf^n(p)) = \frac{1}{n}\displaystyle\sum_{j=0}^{n-1} \log(Jf(f^j(p))). $$

Since $f$ is transitive, by Livsic's theorem \cite{Livsic72}, we can solve the equation,
\begin{equation}\label{density}
\log(J f(x))  =  \phi( f(x))- \phi(x) \Leftrightarrow Jf(x) e^{-\phi(f(x))} = e^{-\phi(x)}.
\end{equation}
From \cite{LlaveMM86}, the solution $\phi$ is $C^{\infty}.$

If we consider the measure $d\nu = e^{-\phi(x)}dm$, for any borelian $S,$ we have
$$ \mu(f(S)) =  \displaystyle\int_{f(S)} e^{-\phi(x)}dm = \displaystyle\int_{S} e^{-\phi(f(y))} Jf(y) dm = \displaystyle\int_{S} e^{-\phi(y)} dm = \mu(S), $$
Since $e^{\phi(x)} \in [\frac{1}{c}, c]$ for some $c > 0,$ we can normalize $\mu$ and to obtain an $f-$invariant probability measure $\tilde{\mu},$ given by $\tilde{\mu}(S) = \frac{\mu(S)}{\mu(M)}.$

\end{proof}

\begin{lemma}\label{lemmafim}
Let $f$ be a $C^\infty$ transitive Anosov diffeomorphism of
a compact manifold $M$ satisfying  the constant periodic data condition. Then $f$ has a unique measure of maximal entropy absolutely continuous with respect to Lebesgue measure with $C^{\infty}$ density.
\end{lemma}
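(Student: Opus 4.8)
The plan is to run the earlier lemmas in the transitive case and then identify the smooth invariant measure produced by Livsic's theorem with Bowen's measure of maximal entropy. Since $f$ is a transitive Anosov diffeomorphism we have $\Omega(f)=M$, so the spectral decomposition used in the proof of Lemma \ref{lem3-rafael} reduces to the single basic set $M$; equivalently, by Theorem 4.1 of \cite{Bowen}, $f$ has a unique measure of maximal entropy $\nu$. By Lemma \ref{lem3-rafael}, the constant periodic data condition forces $Jf^n(p)=1$ whenever $f^n(p)=p$, i.e. $\Lambda^s_f+\Lambda^u_f=0$, and it also yields that $\nu$ is absolutely continuous with respect to Lebesgue $m$ and that $h_{top}(f)=\Lambda^u_f$. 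It remains only to upgrade ``absolutely continuous'' to ``$C^\infty$ density''.

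To do this I would repeat the argument in the proof of Lemma \ref{lemmafinal}. For every periodic $p$ of period $n$ we have $\frac1n\sum_{j=0}^{n-1}\log Jf(f^j(p))=\frac1n\log Jf^n(p)=\Lambda^s_f+\Lambda^u_f=0$, so the continuous additive cocycle $\log Jf$ has vanishing periodic data; since $f$ is transitive Anosov, Livsic's theorem \cite{Livsic72} produces a continuous $\phi$ with $\log Jf(x)=\phi(f(x))-\phi(x)$, and by \cite{LlaveMM86} this $\phi$ is $C^\infty$. The change of variables carried out in Lemma \ref{lemmafinal} then shows that the normalization $\tilde\mu$ of $e^{-\phi(x)}\,dm$ to a probability measure (legitimate because $e^{-\phi}$ is bounded above and below on the compact $M$) is $f$-invariant, equivalent to $m$, and has $C^\infty$ density.

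It remains to verify that $\tilde\mu=\nu$, which gives the conclusion. Since $f$ is $C^\infty$ and $\tilde\mu$ is an absolutely continuous $f$-invariant probability measure, $\tilde\mu$ is ergodic; moreover $\tilde\mu$-almost every point is regular and recurrent, so \eqref{sameu} gives $\Lambda^u_f(x)=\Lambda^u_f$ for $\tilde\mu$-almost every $x$. By the entropy formula for absolutely continuous invariant measures of $C^2$ (hence $C^\infty$) systems (Pesin's formula; see \cite{LEDRAPPIER}), $h_{\tilde\mu}(f)=\int_M\sum_{\lambda_i>0}\lambda_i\,\dim E_i\,d\tilde\mu=\Lambda^u_f=h_{top}(f)$, so $\tilde\mu$ is a measure of maximal entropy. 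By the uniqueness from \cite{Bowen} we conclude $\tilde\mu=\nu$, hence $\nu$ is the unique measure of maximal entropy of $f$, it is equivalent to Lebesgue, and its density is $C^\infty$.

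The step I expect to be the main obstacle is this final identification: Lemma \ref{lem3-rafael} delivers only absolute continuity of $\nu$, so one must recognize the explicit smooth measure $\tilde\mu$ as the maximal one. The entropy/Pesin formula settles this cleanly; alternatively one could invoke the fact that a transitive $C^2$ Anosov diffeomorphism carries at most one absolutely continuous invariant probability measure, both $\nu$ and $\tilde\mu$ being such measures.
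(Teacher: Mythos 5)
Your proof is correct and follows essentially the same route as the paper: transitivity yields the unique measure of maximal entropy via Bowen, Lemma \ref{lem3-rafael} gives the vanishing of the periodic Jacobian data and $h_{top}(f)=\Lambda^u_f$, and the Livsic/de la Llave--Marco--Moriy\'{o}n argument of Lemma \ref{lemmafinal} produces the smooth invariant measure. You are in fact more careful than the paper at the last step: the proof of Lemma \ref{lemmafinal} constructs the smooth invariant measure $\tilde{\mu}$ but never explicitly verifies $\tilde{\mu}=\nu$, whereas your identification via Pesin's entropy formula together with uniqueness of the measure of maximal entropy (or, alternatively, uniqueness of the absolutely continuous invariant probability for a transitive $C^2$ Anosov diffeomorphism) closes that gap.
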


\begin{proof}
Since $f$ is transitive, it implies that $f$ has a unique measure of maximal entropy $\nu.$  Following the argumentation in Lemma \ref{lem3-rafael} we have

$$h_{\nu}(f) = \Lambda^u_f = \int_M \sum_{\lambda_i > 0} \lambda_i dim E_i d\nu, $$
$$ h_{\nu}(f) =  h_{\nu}(f^{-1})  = -\Lambda^s_f = -\int_M \sum_{\lambda_i < 0} \lambda_i dim E_i d\nu. $$

Using S.R.B theory, see \cite{LEDRAPPIER}, the above expressions means that $\nu$ has absolutely continuous density along the stable and unstable foliations, so $\nu$ is absolutely continuous with respect to Lebesgue measure.

Since $Jf^n(p) =1,$ for any $p$ such that $f^n(p) = p,$ by Lemma \ref{lemmafinal}, the measure $\nu$ is absolutely continuous with respect to Lebesgue measure, with $C^{\infty}$ density.
\end{proof}

\begin{remark} When $f : M \rightarrow M,$ is a $C^{1+ \alpha}, 0 < \alpha \leq 1,$ be an Anosov diffeomorphism satisfying constant periodic data condition, the solution $\phi$ of the cohomological equation as in Lemma \ref{lemmafinal} is $C^{\alpha}.$ The Theorems \ref{t1} and \ref{t2} can be formulated in $C^{1+\alpha}$ setting.
\end{remark}

\begin{theorem}\label{alfa}
Let $f: M \rightarrow M$ be a $C^{1+ \alpha}, 0 < \alpha \leq 1,$ Anosov diffeomorphism satisfying the constant periodic data condition (or every point is regular for $f$). Then $f$ has finitely many measures of maximal entropy, such that each one of them is absolutely continuous with respect to Lebesgue measure. Moreover, if $f$ has
a unique measure of maximal entropy, with continuous density with respect to Lebesgue
measure, then this measure is equivalent to Lebesgue and the density is in fact $C^{\alpha}.$
\end{theorem}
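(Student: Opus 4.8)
The plan is to establish Theorem~\ref{alfa} by re-running, essentially verbatim, the proofs of Lemmas~\ref{lem3-rafael}, \ref{fudamental} and \ref{lemmafinal}, and checking that each tool used there survives the weakening of the regularity hypothesis from $C^\infty$ to $C^{1+\alpha}$. The only genuinely new point is the regularity of the Livšic transfer function: in the $C^\infty$ case one invokes \cite{LlaveMM86} to get a $C^\infty$ solution, whereas here one must be content with a $C^\alpha$ solution, which is exactly what is claimed. I first note that the reduction \emph{``every point regular $\Rightarrow$ constant periodic data''} from the paragraph following Theorem~\ref{t2} uses only the local product structure and the regularity of all points; it is valid word for word in the $C^{1+\alpha}$ category, so it suffices to treat the c.p.d.\ case.

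For the first assertion (finitely many measures of maximal entropy, each absolutely continuous), I would reprove Lemma~\ref{lem3-rafael}. The ingredients are all available for a $C^{1+\alpha}$ Anosov diffeomorphism: the Anosov/Katok closing lemma (which only needs uniform hyperbolicity); the continuity of $\phi(x)=\log J^uf(x)$, so that Lemmas~\ref{lemmauniform1}--\ref{lemmauniform4} and Theorem~\ref{teoH} apply and yield $h_{top}(f)=\Lambda^u_f=-\Lambda^s_f$, whence $Jf^n(p)=1$ on every periodic orbit; Ruelle's inequality and the variational principle; Ledrappier's characterization of SRB measures through the entropy formula \cite{LEDRAPPIER} (valid in the $C^{1+\alpha}$ setting); the transverse absolute continuity of the stable and unstable foliations of a $C^{1+\alpha}$ Anosov diffeomorphism; and Bowen's spectral decomposition together with the unique measure of maximal entropy on each transitive basic set, which are purely topological. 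This gives the finiteness and absolute continuity statements with no change. For the transitivity consequence, Lemma~\ref{fudamental} carries over once one knows that an absolutely continuous $f$-invariant measure of a $C^{1+\alpha}$ Anosov diffeomorphism is ergodic (Anosov--Sinai); the rest of that argument uses only expansiveness and Theorem~\ref{teoH}.

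The last half is the analogue of Lemma~\ref{lemmafinal}. Since $f$ is now transitive and $Jf^n(p)=1$ for every periodic $p$, Livšic's theorem \cite{Livsic72} solves $\log Jf(x)=\phi(f(x))-\phi(x)$ with $\phi$ \emph{Hölder} continuous; as $Df$ is $C^\alpha$, the coboundary $\log Jf$ is $C^\alpha$, and the leafwise Livšic argument together with the Hölder version of the de~la~Llave--Marco--Moriyón regularity result \cite{LlaveMM86} (equivalently, Journé's gluing lemma in the limited regularity category) upgrades $\phi$ to $C^\alpha$ along both the stable and the unstable foliations, hence $\phi\in C^\alpha(M)$. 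Then $d\mu=e^{-\phi}\,dm$ is $f$-invariant with $C^\alpha$ density pinched between $1/c$ and $c$; after normalization it is an $f$-invariant probability equivalent to Lebesgue with $C^\alpha$ density, and uniqueness of the measure of maximal entropy forces it to equal $\nu$. I expect this final bootstrap --- propagating $C^\alpha$ regularity along the two weak-regularity foliations and gluing without losing the exponent, so that one lands precisely in $C^\alpha$ rather than in a worse Hölder class --- to be the main technical obstacle; everything else is a matter of confirming that the cited structural results hold at the $C^{1+\alpha}$ level.
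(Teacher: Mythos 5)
Your proposal is correct and follows exactly the route the paper intends: the paper's own ``proof'' of Theorem~\ref{alfa} is just the preceding Remark, which says to re-run the proofs of Lemmas~\ref{lem3-rafael}, \ref{fudamental} and \ref{lemmafinal} in the $C^{1+\alpha}$ category, with the only change being that the Livšic solution $\phi$ of the cohomological equation is now $C^{\alpha}$ instead of $C^{\infty}$. Your more detailed verification that each ingredient (closing lemma, unstable entropy, Ledrappier's criterion, absolute continuity of the foliations, spectral decomposition, Hölder Livšic) survives at the $C^{1+\alpha}$ level is precisely what the paper leaves implicit.
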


\begin{cor}\label{c1}
Let $f: M \rightarrow M$ be a $C^{1+ \alpha}, 0 < \alpha \leq 1,$  Anosov diffeomorphism satisfying constant periodic data condition  (or every point is regular for $f$).  Then $f$ is transitive if and only if $f$ has a unique measure of maximal entropy, absolutely continuous with respect to Lebesgue measure with continuous density.
\end{cor}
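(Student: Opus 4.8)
The plan is to assemble the corollary from Theorem~\ref{alfa} together with the classical fact that a transitive Anosov diffeomorphism has a unique measure of maximal entropy; no new ideas beyond those in the $C^{\infty}$ case are needed, only a check that the ingredients survive in class $C^{1+\alpha}$. First I would dispose of the alternative hypothesis: if every point of $M$ is regular for $f$, then, exactly as in the paragraph following Theorem~\ref{t2}, local product structure forces $x\mapsto\Lambda^u_f(x)$ to be locally constant, hence constant since $M$ is connected, so $f$ satisfies the constant periodic data condition. Thus we may assume c.p.d.\ throughout.

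For the implication ``transitive $\Rightarrow$ unique absolutely continuous measure of maximal entropy with continuous density'': if $f$ is transitive then $\Omega(f)=M$ and by Theorem~4.1 of \cite{Bowen} $f$ has a unique measure of maximal entropy $\nu$. Running the argument of Lemma~\ref{lem3-rafael} in the $C^{1+\alpha}$ category --- Ruelle's inequality and the variational principle for $h_{top}(f)\le\Lambda^u_f$, and the uniform-convergence Lemmas~\ref{lemmauniform1}--\ref{lemmauniform4} applied to $\phi=\log J^uf$ together with the volume-growth estimate \eqref{growthu} and Theorem~\ref{teoH} for $h_{top}(f)\ge\Lambda^u_f$ --- gives $h_{top}(f)=\Lambda^u_f=-\Lambda^s_f$, hence $Jf^n(p)=1$ whenever $f^n(p)=p$. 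S.R.B.\ theory (\cite{LEDRAPPIER}) then shows $\nu$ has absolutely continuous conditionals along $\mathcal{W}^u$ and along $\mathcal{W}^s$, so $\nu\ll m$; and Livsic's theorem \cite{Livsic72} solves $\log Jf=\phi\circ f-\phi$ with $\phi\in C^{\alpha}$ by \cite{LlaveMM86}, so $\nu$ is equivalent to Lebesgue with $C^{\alpha}$ --- in particular continuous --- density. This is precisely Theorem~\ref{alfa}.

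For the converse, ``unique absolutely continuous measure of maximal entropy with continuous density $\Rightarrow$ transitive'': let $\nu$ be that measure and $\rho$ its continuous density. I would repeat the argument of Lemma~\ref{fudamental}, which uses only expansiveness, local product structure, the uniform growth of unstable volume and Theorem~\ref{teoH}, all available in class $C^{1+\alpha}$. If some nonempty open $U$ had $\nu(U)=0$, continuity of $\rho$ gives $U\subset H_0:=\rho^{-1}(\{0\})$, and invariance of $\nu$ gives $U\subset\Lambda:=\bigcap_{n\in\mathbb{Z}}f^{-n}(H_0)$, a compact hyperbolic invariant set containing a local unstable arc. The unstable volume-growth lower bound yields $h_{top}(f|\Lambda)\ge\Lambda^u_f=h_{top}(f)$, so $f|\Lambda$, being expansive, has a measure of maximal entropy whose push-forward to $M$ is a measure of maximal entropy singular to $\nu$, contradicting uniqueness. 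Hence $\nu(U)>0$ for every nonempty open $U$; since a $C^{1+\alpha}$ Anosov diffeomorphism preserving an absolutely continuous measure is ergodic, Birkhoff's theorem applied to $\chi_V$ produces a dense orbit, so $f$ is transitive.

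I expect the main obstacle to be purely a matter of regularity bookkeeping: one must confirm that transverse absolute continuity of $\mathcal{W}^{s},\mathcal{W}^{u}$ and ergodicity of absolutely continuous invariant measures hold for $C^{1+\alpha}$ Anosov maps ($\alpha>0$), that the uniform-convergence lemmas and the identity $h_{top}(f,\mathcal{W})=\chi_{\mathcal{W}}(f)$ need only continuity of $J^uf$, and --- most delicately --- that the solution $\phi$ of the cohomological equation is genuinely $C^{\alpha}$, for which one must invoke the Livsic-type regularity result of \cite{LlaveMM86} rather than accept a Hölder exponent with loss. Once these are in place, the two implications combine to give the stated equivalence.
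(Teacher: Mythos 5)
Your proposal is correct and follows essentially the same route as the paper, which obtains the corollary directly by combining Theorem~\ref{alfa} (i.e., the $C^{1+\alpha}$ adaptations of Lemmas~\ref{lem3-rafael}, \ref{lemmafinal} and \ref{lemmafim} for the forward implication) with the argument of Lemma~\ref{fudamental} for the converse, exactly as you do. Your closing paragraph on the regularity bookkeeping (absolute continuity of the invariant foliations, ergodicity of absolutely continuous invariant measures, and the $C^{\alpha}$ Livsic regularity from \cite{LlaveMM86}) matches the paper's remark preceding Theorem~\ref{alfa}.
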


\begin{question}
Let $f: M \rightarrow M$ be a $C^{1+ \alpha},0< \alpha \leq 1,$  Anosov diffeomorphism satisfying constant periodic data condition  (or every point is regular for $f$). If $f$ has a unique measure of maximal entropy, is $f$ transitive ?
\end{question}

\section{Proof of Theorem \ref{t3}}

To prove Theorem \ref{t3} we will need a lemma concerning ergodic decomposition of a measure.

\begin{lemma}[Jacobs Theorem] Suppose that $M$ is a complete separable metric
space and $f: M \rightarrow M$ a continuous function. Given any borelian invariant probability measure $\mu$ let $\{\mu_P: P \in \mathcal{P}\}$ be its
ergodic decomposition. Then, $$h_{\mu}(f) = \displaystyle\int_M h_{\mu_P}(f)d\hat{\mu}(P), $$ if one side is infinite,
so is the other side.
\end{lemma}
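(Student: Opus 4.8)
The plan is to reduce the statement, via the Kolmogorov--Sinai characterization of entropy as a supremum over finite partitions, to a fiberwise identity for a single partition and then to pass to the limit by monotone convergence. Throughout I would use that, since $M$ is complete and separable, the underlying measure space is standard Borel: the ergodic decomposition $\mu = \int \mu_P \, d\hat\mu(P)$ exists with each $\mu_P$ an ergodic $f$-invariant measure, the map $P \mapsto h_{\mu_P}(f)$ is measurable, and the Borel $\sigma$-algebra is countably generated.

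First I would fix a finite measurable partition $\xi$ and prove the fiberwise formula
\[ h_\mu(f,\xi) = \int h_{\mu_P}(f,\xi) \, d\hat\mu(P). \qquad (\star) \]
Writing $\xi^{(n)} = \bigvee_{i=0}^{n-1} f^{-i}\xi$ and letting $I_n(x) = -\log \mu(\xi^{(n)}(x))$ be the information function of the atom $\xi^{(n)}(x)$ containing $x$, one has the exact identity $\frac1n H_\mu(\xi^{(n)}) = \int \frac1n I_n \, d\mu$. The Shannon--McMillan--Breiman theorem, in its form for not necessarily ergodic invariant measures, asserts that $\frac1n I_n$ converges $\mu$-almost everywhere and in $L^1(\mu)$ to an $f$-invariant function $I_\infty$ whose value at $\mu$-almost every $x$ equals the entropy of the ergodic component through $x$, that is $I_\infty(x) = h_{\mu_{P(x)}}(f,\xi)$. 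Passing to the limit in the displayed identity via $L^1$-convergence then yields $(\star)$.

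Next I would choose an increasing sequence of finite partitions $\xi_1 \le \xi_2 \le \cdots$ whose union generates the Borel $\sigma$-algebra, available because the $\sigma$-algebra is countably generated. Monotonicity of partition entropy under refinement gives that $h_{\mu_P}(f,\xi_k)$ increases to $h_{\mu_P}(f)$ for each $P$, and likewise $h_\mu(f,\xi_k)$ increases to $h_\mu(f)$, the limits being the full entropies by the standard estimate $h_\nu(f,\eta) \le h_\nu(f,\xi_k) + H_\nu(\eta \mid \xi_k)$ together with $H_\nu(\eta \mid \xi_k) \to 0$ as $\xi_k$ generates. Applying $(\star)$ to each $\xi_k$ and then the monotone convergence theorem to the increasing, $[0,+\infty]$-valued integrands $P \mapsto h_{\mu_P}(f,\xi_k)$ gives
\[ h_\mu(f) = \lim_k h_\mu(f,\xi_k) = \lim_k \int h_{\mu_P}(f,\xi_k)\, d\hat\mu(P) = \int h_{\mu_P}(f)\, d\hat\mu(P). \]
Since monotone convergence is valid for $[0,+\infty]$-valued functions, the case in which either side is infinite is covered simultaneously, so the final clause of the lemma requires no separate argument.

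The main obstacle is the fiberwise identity $(\star)$, and specifically the identification of the Shannon--McMillan--Breiman limit $I_\infty$ with the component entropy $h_{\mu_{P(x)}}(f,\xi)$. The delicate point is that $I_n$ is computed with respect to the global measure $\mu$, whereas the target value is the entropy of the component $\mu_P$; reconciling them requires controlling $\mu(\xi^{(n)}(x))$ against $\mu_{P(x)}(\xi^{(n)}(x))$ along the decomposition and invoking that $\mu_P$-almost every point is generic for its own component. If one prefers to avoid the non-ergodic form of the theorem, an alternative is to establish $(\star)$ by two inequalities: concavity of $t \mapsto -t\log t$ with Jensen's inequality applied to $\xi^{(n)}$ gives $h_\mu(f,\xi) \ge \int h_{\mu_P}(f,\xi)\, d\hat\mu(P)$ after a dominated-convergence passage, while the reverse inequality follows from the affinity of $\nu \mapsto h_\nu(f,\xi)$ on invariant measures, extended from finite convex combinations to the integral decomposition. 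Either route isolates the same essential fact, namely that fixed-partition entropy integrates correctly over the ergodic components.
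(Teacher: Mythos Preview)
The paper does not give its own proof of this lemma; it simply cites Oliveira--Viana, \emph{Foundations of Ergodic Theory}, p.~287 (Theorem 9.6.2 there). Your outline is a correct and standard route to Jacobs' theorem and is in the same spirit as the proof in that reference: establish the fiberwise identity $(\star)$ for a fixed finite partition, then pass to the supremum via a generating sequence and monotone convergence.

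Two small remarks on your sketch. First, your primary argument for $(\star)$ via the non-ergodic Shannon--McMillan--Breiman theorem is valid, but as you yourself note, the identification of the a.e.\ limit of $\tfrac1n I_n$ (computed with $\mu$) with $h_{\mu_{P(x)}}(f,\xi)$ is exactly the nontrivial content; invoking it here is close to assuming what you want. Your alternative route is cleaner and self-contained: concavity of $t\mapsto -t\log t$ plus dominated convergence (the integrands are bounded by $\log\#\xi$) gives $h_\mu(f,\xi)\ge\int h_{\mu_P}(f,\xi)\,d\hat\mu(P)$, and the reverse comes from the standard estimate $H_\mu(\xi^{(n)})\le \int H_{\mu_P}(\xi^{(n)})\,d\hat\mu(P)+C$, where $C$ is a bounded defect independent of $n$ (this is how one proves affinity of $\nu\mapsto h_\nu(f,\xi)$ on invariant measures), so after dividing by $n$ the defect vanishes. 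Second, in the last step you need not only that $h_\mu(f,\xi_k)\uparrow h_\mu(f)$ but that the same holds for each ergodic component $\mu_P$; your generating-sequence argument covers this since the $\xi_k$ generate the Borel $\sigma$-algebra for every Borel probability measure simultaneously.
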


For the proof of the above lemma we recommend \cite{OV}, page 287.

From the Jacobs Theorem, if $f$ have a unique measure $\nu$ of maximal entropy (finite), then this measure is one of the ergodic component, particularly $\nu$ is ergodic.

In the case of Theorem \ref{t3}, let $\nu$ the unique measure of maximal entropy of $f,$ we have $h_{\nu}(f) = \Lambda^u_f < \infty.$ We conclude that $\nu$ is ergodic.

Arguing as in Lemma \ref{fudamental}, since  $\nu$ is absolutely continuous with respect to Lebesgue measure with continuous density, it follows that $\nu(U) > 0,$ for any open set $U \neq \emptyset.$ It joint with the ergodicity of $\nu$ imply the transitivity of $f,$ as we did in Lemma \ref{fudamental}.

\section{Final Comments}

In \cite{Y} the author proves that there are no transitive Anosov diffeomorphisms
on negatively curved manifolds. In fact, it is a problem to know if such manifolds support Anosov diffeomorphisms. If a  negatively curved manifold $M$ supports a $C^{\infty}-$Anosov diffeomorphism $f: M \rightarrow M$ satisfying constant periodic data condition (or every point is regular for $f$), then
\begin{enumerate}
\item either $f$ has more than one measure of maximal entropy, each of these measures are absolutely continuous with respect to Lebesgue measure,
\item or if there is a unique measure of maximal entropy for $f,$ then its density with respect to Lebesgue measure is not continuous.
\end{enumerate}

%
%


\begin{thebibliography}{RRRRRR}


\bibitem{AAS}
J. Alves, V. Ara\'{u}jo, B. Saussol,
\newblock On the Uniform Hyperbolicity of some
Nonuniformly Hyperbolic Systems.
\newblock {\em Proceedings of the American Mathematical Society,} 131(4): 1303--1309, 2003.


%



%
%
%



\bibitem{Bo74}
R. Bowen,
\newblock Periodic Points and Measures for Axiom A Diffeomorphisms .
\newblock {\em Transactions of the American Mathematical Society,} Vol. 154: 377--397, 1971.


\bibitem{Bo}
R. Bowen,
\newblock Some systems with unique equilibrium states.
\newblock {\em Math.
Systems Theory,} 8(1974/1975): 193--202.


\bibitem{Bowen}
R. Bowen,
\newblock Equilibrium States
and the Ergodic Theory
of Anosov Diffeomorphisms.
\newblock {\em Lecture Notes in Mathematics,} 470, Springer 1975.
%


\bibitem{Cao}
Y. Cao,
\newblock Non-zero Lyapunov Exponent and Uniform Hyperbolicity.
\newblock {\em Nonlinearity,} 16: 1473--1479, 2003.
%

%
\bibitem{FW}
J.Franks, R. Williams,
\newblock Anomalous Anosov flows.
\newblock {\em Global Theory of Dynamical Systems,} 158--174. New York, Springer-Verlag, 1980.
%

%
%
%
%


\bibitem{Good}
T.N.T, Goodman,
\newblock Maximal measures for expansive homeomorphisms.
{\em J. London Math. Soc.,} 2(5)(1972), 439--444.


%
\bibitem{Hua}
H. Hu, Y. Hua, W. Wu,
\newblock Unstable Entropies and Variatonal Principle for Partially Hyperbolic Diffeomorphisms.
\newblock {\em Advances in Mathematics,} 321: 31--68, 2017.

%
%
%
%
\bibitem{LEDRAPPIER}
F. Ledrappier,
\newblock Propri\`{e}t\`{e}s ergodiques des mesures de Sina\"{i}.
\newblock {\em Publ. Math. I.H.E.S,} 59: 163--188, 1984.
%

\bibitem{Livsic72}
A. Livsic,
\newblock Cohomology of dynamical systems.
{\em Math. USSR-Izv.,}6: 1278--1301, (1972).



\bibitem{Llave92}
R. de la Llave,
\newblock Smooth conjugacy and SRB measures for uniformly and non-uniformly systems.
{\em Comm. Math. Phys.,} 150(2): 289--320, 1992.


\bibitem{LlaveMM86}
R. de la Llave, J.M. Marco, R. Moriy\'{o}n,
\newblock Canonical Perturbation Theory of Anosov Systems and Regularity Results for the Livsic Cohomology Equation.
{\em Annals of Mathematics,} 123(3): 537--611, 1986


\bibitem{LM20}
R. de la Llave, F. Micena,
\newblock Lyapunov exponents everywhere and rigidity.
{\em https://arxiv.org/pdf/2006.00406.pdf}.

%
\bibitem{MT}
P.Mehdipour, A. Tahzibi,
\newblock  SRB measures and homoclinic relation for endomorphisms.
\newblock {\em J. Stat. Phys.,} 163(1), 139--155 (2016).
%


\bibitem{OV}
K. Oliveira, M. Viana,
\newblock Foundations of Ergodic Theory.
\newblock {\em  Cambridge studies in advanced mathematics 151}, 2016.


\bibitem{Oseledets}
V. Oseledets,
\newblock Multiplicative Ergodic Theorem. Lyapunov Characteristic Numbers for Dynamical Systems.
\newblock {\em Trans. Moscow Math. Soc.,} Vol. 19: 197--221, 1968.



%
%
%
%
%
%
%
%
%
%
%
\bibitem{Y}
K. Yano,
\newblock There are no Transitive Anosov Diffeomorphisms
on Negatively Curved Manifolds.
{\em Proc. Japan Acad.,} 59: 445, 1983.
\end{thebibliography}

\end{document}